\newtheorem{theorem}{Theorem}[section]
\newtheorem{lemma}[theorem]{Lemma}
\theoremstyle{definition}
\theoremstyle{remark}
\numberwithin{equation}{section}
\begin{document}

\title[an adaptive FEM DtN method]
{An adaptive finite element DtN method for the three-dimensional acoustic
  scattering problem}

\author{Gang Bao}
\address{School of Mathematical Science, Zhejiang University,
Hangzhou 310027, China.}
\email{baog@zju.edu.cn}

\author{Mingming Zhang}
\address{School of Mathematical Science, Zhejiang University,
Hangzhou 310027, China.}
\email{mmzaip@zju.edu.cn}

\author{Bin Hu}
\address{School of Mathematical Science, Zhejiang University,
Hangzhou 310027, China.}
\email{binh@zju.edu.cn}

\author{Peijun Li}
\address{Department of Mathematics, Purdue University, West Lafayette,
Indiana 47907, USA.}
\email{lipeijun@math.purdue.edu}

\thanks{The work of GB is supported in part by an NSFC Innovative Group Fund
(No.11621101). The research of PL is supported
in part by the NSF grant DMS-1912704.}

\subjclass[2010]{65M30, 78A45, 35Q60}

\keywords{acoustic scattering problem, adaptive finite element method,
transparent boundary condition, a posteriori error estimates}

\begin{abstract}
This paper is concerned with a numerical solution of the acoustic scattering by
a bounded impenetrable obstacle in three dimensions. The obstacle scattering
problem is formulated as a boundary value problem in a bounded domain by using
a Dirichlet-to-Neumann (DtN) operator. An a posteriori error estimate is
derived for the finite element method with the truncated DtN operator. The a
posteriori error estimate consists of the finite element approximation error
and the truncation error of the DtN operator, where the latter is shown to decay
exponentially with respect to the truncation parameter. Based on the a
posteriori error estimate, an adaptive finite element method is
developed for the obstacle scattering problem. The truncation parameter is
determined by the truncation error of the DtN operator and the mesh elements for
local refinement are marked through the finite element approximation error.
Numerical experiments are presented to demonstrate the effectiveness of the
proposed method. 
\end{abstract}

\maketitle

\section{Introduction}

Wave scattering by bounded impenetrable media is usually referred to as the
obstacle scattering problem. It has played an important role in many scientific
areas such as radar and sonar, non-destructive testing, medical imaging, and
geophysical exploration \cite{CK98}. Due to the significant applications, the
obstacle scattering problem has been extensively studied in the past several
decades. Consequently, a variety of methods have been developed to solve the
scattering problem mathematically and numerically such as the method of boundary
integral equations \cite{CK83, N01} and the finite element method \cite{J93,
M03}. This paper concerns a numerical solution of the acoustic wave scattering
by an obstacle in three dimensions.  

As an exterior boundary value problem, the obstacle scattering problem is
formulated in an open domain, which needs to be truncated into a bounded
computational domain when applying numerical methods such as the finite element
method. It is indispensable to impose a boundary condition on the boundary of
the truncated domain. The ideal boundary condition is to completely avoid
artificial wave reflection by mimicking the wave propagation as if the boundary
did not exist \cite{BT80}. Such a boundary condition is called an absorbing
boundary condition \cite{EM77}, a nonreflecting boundary condition \cite{GK95},
or a transparent boundary condition (TBC) \cite{GK04}. It still remains as an
active research topic in
computational wave propagation \cite{H99}, especially for time-domain scattering
problems \cite{BGL18}. Since Berenger proposed the perfectly matched layer (PML)
technique for the time-domain Maxwell equations \cite{B94}, the PML method has
been extensively studied for various wave propagation problems \cite{BW05,
CM98, TY98}. As an effective approach for the domain truncation, the basic idea
of the PML technique is to surround the domain of interest by a layer of finite
thickness with specially designed artificial medium that would attenuate all the
waves coming from inside of the domain. Combined with the PML technique, the a
posteriori error estimate based adaptive finite element methods were developed
for the diffraction grating problems \cite{CW03, BLW10} and the obstacle
scattering problems \cite{CL05}. It was shown that the estimates consist of the
finite element discretization error and the PML truncation error which has an
exponential rate of convergence with respect to the PML parameters. 

Recently, an alternative adaptive finite element method was developed for
solving the two-dimensional acoustic obstacle scattering problem \cite{JLZ13},
where the PML was replaced by the TBC to truncate the open domain. Since the
TBC is exact, it can be imposed on the boundary which could be put as close as
possible to the obstacle. Hence it does not require an extra absorbing layer of
artificial
medium to enclose the domain of interest. Based on a nonlocal
Dirichlet-to-Neumann (DtN) operator, the TBC is given as an infinite
Fourier series. Practically, the series needs to be truncated into a sum of
finitely many, say $N$, terms, where $N$ is an appropriately chosen positive
integer. In \cite{JLZ13}, an a posteriori error estimate was derived for the
finite element discretization
but it did not include the truncation error of the DtN operator. The complete a
posteriori error estimate was obtained in \cite{JLLZ-JSC17}. The new estimate
takes into account both the finite element discretization error and the DtN
operator truncation error. It was shown that the truncation error decays
exponentially with respect to the truncation parameter $N$. The adaptive finite
element DtN method has also been applied to solve the diffraction grating
problems \cite{WBLLW15} as well as the elastic wave equation in periodic
structures \cite{LY20}. The numerical results show that the adaptive finite
element DtN method is competitive with the adaptive finite element PML method. 

In this work, we extend the analysis in \cite{JLLZ-JSC17} to the
three-dimensional obstacle scattering problem. It is worthy to mention that
the extension is nontrivial since more complex spherical Hankel functions need
to be considered and the computation is more challenging in three dimensions.
Specifically, we consider the acoustic wave scattering by a sound hard obstacle.
Based on a TBC, the exterior problem is formulated equivalently into a boundary
value problem in a bounded domain for the three-dimensional Helmholtz equation.
Using a duality argument, we derive the a posteriori error estimate which
includes the finite element discretization error and the DtN operator truncation
error. Moreover, we show that the truncation error has an exponential rate of
convergence with respect to the truncation parameter $N$. The a posteriori
estimate is used to design the adaptive finite element algorithm to choose
elements for refinements and to determine the truncation parameter $N$. In
addition, we present a technique to deal with adaptive mesh refinements of the
surface. Numerical experiments are included to demonstrate the effectiveness of
the proposed method.

This paper is organized as follows. In Section 2, we introduce the model problem
of the acoustic wave scattering by an obstacle in three dimensions. The
variational formulation is given for the boundary value problem by using the
DtN operator. In Section 3, we present the finite element approximation with the
truncated DtN operator. Section 4 is devoted to the a posteriori error analysis
by using a duality argument. In Section 5, we discuss the numerical
implementation and the adaptive finite element DtN method, and present two
numerical examples to demonstrate the effectiveness of the proposed method. The
paper is concluded with some general remarks and directions for future work in
Section 6.

\section{Problem formulation}

Consider a bounded sound-hard obstacle $D$ with Lipschitz continuous
boundary $\partial D$ in $\mathbb R^3$. Denote by
$B_r=\{x\in\mathbb{R}^3:|x|<r\}$ the ball which is centered at the origin and
has a radius $r$. Let $R$ and $R'$ be two positive constants
such that $R>R'>0$ and $\overline{D}\subset B_{R'}\subset B_{R}$. Denote
$\Omega=B_R\backslash\overline{D}$. The obstacle scattering problem for
acoustic waves can be modeled by the following exterior boundary value problem: 
\begin{equation}\label{eq}
  \begin{cases}
\Delta u + \kappa^2 u = 0 \quad &{\rm in} ~
    \mathbb{R}^3\setminus\overline{D}, \\
\partial_\nu u = -g \quad &{\rm on } ~\partial D, \\
\lim\limits_{r\rightarrow \infty} r(\partial_r u
     - {\rm i}\kappa u) = 0, &r=|x|,
  \end{cases}
\end{equation}
where $\kappa>0$ is the wavenumber and $\nu$ is the unit outward normal 
vector to $\partial D$. Although the results are given for the sound-hard
boundary condition in this paper, the method can be applied to other types of
boundary conditions, such as the sound-soft and impedance boundary conditions.

Let
  $\hat{x}_1=\sin\theta\cos\varphi$, $\hat{x}_2=\sin\theta\sin\varphi$, $\hat{x}_3=\cos\theta$,
  $\theta\in [0,\pi]$ and $\varphi\in [0,2\pi]$. Introduce the spherical
harmonic functions
\begin{equation*}
  Y_n^m(\hat{x})=Y_n^m(\theta,\varphi)
  =\sqrt{\frac{(2n+1)(n-|m|)!}{4\pi(n+|m|)!}}P_n^{|m|}(\cos\theta)
  e^{{\rm i}m\varphi}, \quad m=-n,\dots, n,\, n=0, 1, \dots,
  \end{equation*}
where \[
P_n^m(t)=(1-t^2)^{\frac{m}{2}}\frac{{\rm d}^m}{{\rm
d}t^m}P_n(t),\quad -1\leq t\leq 1,
\]
are called the associated Legendre functions and $P_n$ are the Legendre 
polynomials. It is known that the spherical harmonic functions $\{Y_n^m: 
m=-n,\dots, n,\, n=0, 1, \dots\}$ form an orthonormal system in
$L^2(\mathbb{S}^2)$, where $\mathbb S^2=\{x\in\mathbb R^3: |x|=1\}$ is the unit
sphere in $\mathbb R^3$. For any function $u\in L^2(\partial B_R)$, it admits
the Fourier series expansion
\[
u(x):=u(R, \hat
x)=\sum_{n=0}^{\infty}\sum_{m=-n}^{m=n}\hat{u}_n^m(R)Y_n^m(\hat{x}),\quad
\hat{u}_n^m=\int_{\mathbb{S}^2}u(R,\hat x)\bar{Y}_n^m(\hat x)
  {\rm d}\hat x. 
\]

Using the Fourier coefficients, we may define an equivalent $L^2(\partial B_R)$
norm of $u$ as
\[
\|u\|_{L^2(\partial
B_R)}=\left(\sum_{n=0}^{\infty}\sum_{m=-n}^{n}|\hat{u}_n^m|^2\right)^{\frac{1}{2
}}.
\]
 The trace space $H^{s}(\partial B_R)$ is defined by
\[
H^{s}(\partial B_R)=\left\{u\in L^2(\partial B_R) ~
:~\|u\|_{H^{s}(\partial B_R)}<\infty\right\},
\]
where the norm may be characterized by 
\begin{equation}\label{seminorm}
\|u\|_{H^{s}(\partial B_R)}^2
=\sum_{n=0}^{\infty}\sum_{m=-n}^{m=n}\left(1+n(n+1)\right)^s|\hat{u}_n^m|^2.
\end{equation}
Clearly, the dual space of $H^{-s}(\partial B_R)$ is
$H^{s}(\partial B_R)$ with respect to the scalar product in
$L^2(\partial B_R)$ defined by
\[
\langle u,v\rangle_{\partial B_R}=\int_{\partial B_R}u\bar{v}{\rm d}s.
\]

In the exterior domain $\mathbb{R}^3\backslash \overline{B}_R$, the
solution of the Helmholtz equation in (\ref{eq}) can be written as
\begin{equation}\label{exp}
  u(r,\hat x)
  =\sum_{n=0}^{\infty}\sum_{m=-n}^{n}\hat{u}_n^m\frac{h_n^{(1)}(\kappa
    r)}{h_n^{(1)}(\kappa R)}Y_n^m(\hat x), \quad r>R,
  \end{equation}
where $h_n^{(1)}$ is the spherical Hankel function of the first
kind with order $n$ and is defined as (cf. \cite{HH-cmame15})
\begin{equation*}
h_n^{(1)}(z)=\sqrt{\frac{\pi}{2z}}H_{n+\frac{1}{2}}^{(1)}(z).
\end{equation*}
Here $H_{n+\frac{1}{2}}^{(1)}(\cdot)$ is the Hankel function of
the first kind with order $n+\frac{1}{2}$. 

Define the DtN operator $T: H^{\frac{1}{2}}(\partial B_R)
\rightarrow H^{-\frac{1}{2}}(\partial B_R)$ by
\begin{equation}\label{tu}
  (Tu)(R, \hat x)=\frac{1}{R}\sum_{n=0}^{\infty}\Theta_n(\kappa
  R)\sum_{m=-n}^{n}\hat{u}_n^m Y_n^m(\hat x),
\end{equation}
where 
\begin{equation*}
\Theta_n(z)=z\frac{{h_n^{(1)}}^{'}(z)}{h_n^{(1)}(z)},
\end{equation*}
which satisfies (cf. \cite{FNS07, HH-cmame15}):
\begin{equation}\label{pro}
\Re\Theta_n(z)\leq -\frac{1}{2}, \quad \Im\Theta_n(z)>0,\quad
\Theta_n(z)\sim n, \quad n\rightarrow\infty.
\end{equation}

The DtN operator has the following properties. The proof is similar to that of 
\cite[Lemma 1.2]{JLZ13} and is omitted here for brevity. 

\begin{lemma}\label{estT}
The DtN operator $T: H^{\frac{1}{2}}(\partial B_R)\to
H^{-\frac{1}{2}}(\partial B_R)$  is continuous, i.e., 
\[
 \|T u\|_{H^{-\frac{1}{2}}(\partial B_R)}
  \lesssim \|u\|_{H^{\frac{1}{2}}(\partial B_R)}. 
\]
Moreover, it satisfies
\begin{equation*}
 -\Re\langle Tu, u\rangle \gtrsim \|u\|_{L^{2}(\partial B_R)}^2,\quad
 \Im\langle T u, u\rangle\geq 0.
\end{equation*}
\end{lemma}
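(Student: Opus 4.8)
The plan is to exploit the fact that both the operator $T$ and the relevant norms are diagonal in the orthonormal basis $\{Y_n^m\}$ of spherical harmonics, so that every assertion reduces to a pointwise estimate on the symbol $\Theta_n(\kappa R)$ supplied by \eqref{pro}. First I would expand $u(R,\hat x)=\sum_{n=0}^{\infty}\sum_{m=-n}^{n}\hat u_n^m Y_n^m(\hat x)$ and read off from \eqref{tu} that the Fourier coefficients of $Tu$ are $\frac{1}{R}\Theta_n(\kappa R)\hat u_n^m$. Using the orthonormality of $\{Y_n^m\}$ in $L^2(\mathbb S^2)$ together with $\mathrm ds=R^2\,\mathrm d\hat x$ on $\partial B_R$, I would then compute
\[
\langle Tu,u\rangle_{\partial B_R}
 =R\sum_{n=0}^{\infty}\sum_{m=-n}^{n}\Theta_n(\kappa R)\,|\hat u_n^m|^2 .
\]
Since each $|\hat u_n^m|^2\geq 0$ and $R>0$, the real and imaginary parts of this series are controlled term by term by $\Re\Theta_n$ and $\Im\Theta_n$.

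The coercivity and sign statements then follow at once. Taking real parts and using $\Re\Theta_n(\kappa R)\leq-\tfrac12$ from \eqref{pro} gives $-\Re\langle Tu,u\rangle\geq\tfrac{R}{2}\sum_{n,m}|\hat u_n^m|^2=\tfrac{R}{2}\|u\|_{L^2(\partial B_R)}^2$, which is the desired lower bound up to the fixed constant $R/2$. Taking imaginary parts and using $\Im\Theta_n(\kappa R)>0$ yields $\Im\langle Tu,u\rangle=R\sum_{n,m}\Im\Theta_n(\kappa R)\,|\hat u_n^m|^2\geq 0$.

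For continuity I would insert the coefficients of $Tu$ into the norm \eqref{seminorm} with $s=-\tfrac12$, obtaining
\[
\|Tu\|_{H^{-\frac12}(\partial B_R)}^2
 =\frac{1}{R^2}\sum_{n,m}\bigl(1+n(n+1)\bigr)^{-\frac12}\,|\Theta_n(\kappa R)|^2\,|\hat u_n^m|^2,
\]
and compare it with $\|u\|_{H^{\frac12}(\partial B_R)}^2=\sum_{n,m}(1+n(n+1))^{\frac12}|\hat u_n^m|^2$. Matching the two weights requires the uniform bound $|\Theta_n(\kappa R)|^2\lesssim 1+n(n+1)$, and this is the main (indeed the only nontrivial) obstacle. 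The asymptotic relation $\Theta_n(z)\sim n$ in \eqref{pro} controls the large-$n$ regime, giving $|\Theta_n(\kappa R)|\lesssim 1+n$; for the finitely many small indices boundedness is clear since $h_n^{(1)}(\kappa R)\neq 0$ for the fixed real argument $\kappa R>0$. Combining these and using $(1+n)^2\simeq 1+n(n+1)$ closes the estimate and yields $\|Tu\|_{H^{-\frac12}(\partial B_R)}\lesssim\|u\|_{H^{\frac12}(\partial B_R)}$. I would be careful to record the growth of $\Theta_n$ uniformly in the fixed argument $z=\kappa R$, since it is precisely that uniformity which the norm comparison invokes.
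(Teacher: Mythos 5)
Your proof is correct and is essentially the argument the paper intends: the paper omits the proof, citing the two-dimensional analogue in \cite{JLZ13}, and that argument is precisely the diagonalization you carry out---expanding in spherical harmonics, reducing each assertion to the symbol $\Theta_n(\kappa R)$, and invoking \eqref{pro}. Your identity $\langle Tu,u\rangle_{\partial B_R}=R\sum_{n,m}\Theta_n(\kappa R)|\hat u_n^m|^2$ is the same one the paper itself uses later in the proof of Theorem \ref{uniq}, and your handling of continuity (the bound $|\Theta_n(\kappa R)|\lesssim 1+n$, obtained from $\Theta_n\sim n$ for large $n$ and from $h_n^{(1)}(\kappa R)\neq 0$ for the finitely many remaining indices) correctly settles the only nontrivial point.
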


Here $a\lesssim b$ or $a\gtrsim b$ stands for $a\leq Cb$ or $a\geq Cb$, where
$C$ is a positive constant whose specific value is not required and may be
different in the context. 

It follows from (\ref{exp})--(\ref{tu}) that we have the
transparent boundary condition
\begin{equation}\label{tbc}
  \partial_ru=Tu \quad {\rm on}~\partial B_R.
\end{equation}
The weak formulation of (\ref{eq}) is to find $u\in H^1(\Omega)$ such that
\begin{equation}\label{wf}
  a(u,v)=\langle g,v\rangle_{\partial D}  \quad\forall\, v\in H^{1}(\Omega),
\end{equation}
where the sesquilinear form $a:~H^1(\Omega)\times H^1(\Omega)\rightarrow
\mathbb{C}$ is defined by
\begin{equation*}
a(u,v)=\int_\Omega \nabla u\cdot\nabla\bar{v}{\rm d}x
-\kappa^2\int_\Omega u\bar v{\rm d}x-\langle Tu,v\rangle_{\partial B_R}
\end{equation*}
and the linear functional
\[
\langle g,v\rangle_{\partial D}=\int_{\partial D}g\bar{v}{\rm d}s.
\]

\begin{theorem}\label{uniq}
The variational problem \eqref{wf} has at most one solution.
\end{theorem}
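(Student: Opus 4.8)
The plan is to use linearity together with the sign structure of the DtN operator, reducing the statement to a vanishing argument for the homogeneous problem. Suppose $u_1$ and $u_2$ both solve \eqref{wf}; then $u:=u_1-u_2\in H^1(\Omega)$ satisfies $a(u,v)=0$ for every $v\in H^1(\Omega)$, since the right-hand side is linear in the data and the two copies cancel. First I would choose the test function $v=u$, which yields
\begin{equation*}
\int_\Omega|\nabla u|^2\,{\rm d}x-\kappa^2\int_\Omega|u|^2\,{\rm d}x-\langle Tu,u\rangle_{\partial B_R}=0.
\end{equation*}
The two volume integrals are real, so taking the imaginary part of this identity removes them entirely and leaves $\Im\langle Tu,u\rangle_{\partial B_R}=0$. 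Note that coercivity of $a$ is unavailable here because of the indefinite term $-\kappa^2\int_\Omega|u|^2\,{\rm d}x$, so the argument must run through the imaginary part rather than the real part.

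Next I would diagonalize the boundary term. Substituting the Fourier expansion of $u|_{\partial B_R}$ into the definition \eqref{tu} and using the orthonormality of the spherical harmonics shows that
\begin{equation*}
\langle Tu,u\rangle_{\partial B_R}=c\sum_{n=0}^\infty\Theta_n(\kappa R)\sum_{m=-n}^n|\hat u_n^m|^2
\end{equation*}
for some constant $c>0$, so that $\Im\langle Tu,u\rangle_{\partial B_R}=c\sum_{n=0}^\infty\Im\Theta_n(\kappa R)\sum_{m=-n}^n|\hat u_n^m|^2$. By the strict positivity $\Im\Theta_n(\kappa R)>0$ recorded in \eqref{pro}, every term of this non-negative series must vanish, which forces $\hat u_n^m=0$ for all $n$ and $m$. (The weaker bound $\Im\langle Tu,u\rangle\ge0$ from Lemma \ref{estT} is not sufficient; the mode-by-mode strict sign is the point.) Hence $u=0$ on $\partial B_R$, and the transparent boundary condition \eqref{tbc} gives in addition $\partial_r u=Tu=0$ on $\partial B_R$.

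Finally I would propagate this vanishing Cauchy data inward by unique continuation. Extending $u$ by zero on $\mathbb R^3\setminus\overline{B_R}$ produces a function whose trace and normal derivative match across $\partial B_R$ (both being zero), so the extension is a weak solution of $\Delta u+\kappa^2 u=0$ in the connected domain $\mathbb R^3\setminus\overline D$ that vanishes on the open exterior $\{|x|>R\}$. Unique continuation for the Helmholtz equation then gives $u\equiv0$ in $\mathbb R^3\setminus\overline D$, so in particular $u=0$ in $\Omega$ and $u_1=u_2$. I expect the passage from zero boundary data to zero in $\Omega$ to be the main obstacle: the absorption encoded in $\Im\Theta_n>0$ controls only the field on $\partial B_R$, and since $a$ is not coercive the interior triviality genuinely requires the unique continuation principle rather than an energy estimate.
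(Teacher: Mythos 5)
Your proof is correct and follows essentially the same route as the paper: test the homogeneous problem with $u$ itself, take the imaginary part so that the real volume terms drop out, use the strict positivity $\Im\Theta_n(\kappa R)>0$ mode by mode to force all Fourier coefficients $\hat u_n^m$ to vanish, and then conclude from the vanishing Cauchy data $u=\partial_r u=0$ on $\partial B_R$ via unique continuation that $u\equiv 0$ in $\Omega$. The only cosmetic difference is that you implement the final step through an explicit zero extension to $\{|x|>R\}$, whereas the paper simply invokes the Holmgren theorem and unique continuation; these are the same idea.
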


\begin{proof}
It suffices to show that $u=0$ if $g=0$. By \eqref{wf}, we have 
\[
\int_{\Omega}(|\nabla u|^2-\kappa^2|u|^2){\rm d}x-\langle T u,u\rangle_{\partial
B_R}=0.
\]
Taking the imaginary part of the above equation yields
\[
\Im\langle T u,u\rangle_{\partial B_R} = R
\sum_{n=0}^{\infty}\sum_{m=-n}^m\Im\Theta_n(\kappa R)|\hat{u}_n^m|^2=0,
\]
which gives that $\hat{u}_n^m=0$ by (\ref{pro}). Thus we have from \eqref{exp}
and \eqref{tbc} that $u = 0$ and $\partial_r u = 0$ on $\partial B_R$. We
conclude from the Holmgren uniqueness theorem and the unique
continuation \cite{JK85} that $u=0$ on $\Omega$.
\end{proof}

\begin{theorem}
The variational problem $\rm(\ref{wf})$ admits a unique weak solution $u$ in
$H^1(\Omega)$. Furthermore, there is a positive constant $C$ depending
on $\kappa$ and $R$ such that
\begin{equation*}
\|u\|_{H^1(\Omega)}\leq C\|g\|_{L^2(\partial D)}. 
\end{equation*}
\end{theorem}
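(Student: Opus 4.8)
The plan is to establish existence together with the stability bound by means of the Fredholm alternative, leveraging the uniqueness already proved in Theorem \ref{uniq}. The sesquilinear form $a$ is not coercive because of the negative zeroth-order term $-\kappa^2\int_\Omega u\bar v\,dx$, so the Lax--Milgram lemma cannot be applied to $a$ directly. First I would split $a$ into a coercive part plus a compact perturbation. Introduce
\[
b(u,v)=\int_\Omega \nabla u\cdot\nabla\bar v\,dx+\int_\Omega u\bar v\,dx-\langle Tu,v\rangle_{\partial B_R},
\]
so that $a(u,v)=b(u,v)-(1+\kappa^2)\int_\Omega u\bar v\,dx$. By Lemma \ref{estT} we have $-\Re\langle Tu,u\rangle_{\partial B_R}\ge 0$, whence $\Re b(u,u)\ge\|\nabla u\|_{L^2(\Omega)}^2+\|u\|_{L^2(\Omega)}^2=\|u\|_{H^1(\Omega)}^2$; the continuity of $T$ (again Lemma \ref{estT}) together with the trace theorem shows $b$ is bounded on $H^1(\Omega)\times H^1(\Omega)$. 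Thus $b$ is coercive and, by Lax--Milgram, induces an isomorphism $\mathcal B:H^1(\Omega)\to H^1(\Omega)$ with bounded inverse.

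Next I would encode the remaining terms as operators on $H^1(\Omega)$ via the Riesz representation theorem. The perturbation $(u,v)\mapsto(1+\kappa^2)\int_\Omega u\bar v\,dx$ defines a bounded operator $\mathcal K$ on $H^1(\Omega)$ that factors through the inclusion $H^1(\Omega)\hookrightarrow L^2(\Omega)$; since this inclusion is compact by Rellich's theorem, $\mathcal K$ is compact. Likewise the right-hand side $\langle g,v\rangle_{\partial D}$ is a bounded antilinear functional on $H^1(\Omega)$ with norm $\lesssim\|g\|_{L^2(\partial D)}$ by the trace theorem, and is represented by some $\mathcal G\in H^1(\Omega)$ with $\|\mathcal G\|_{H^1(\Omega)}\lesssim\|g\|_{L^2(\partial D)}$. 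The variational problem \eqref{wf} is then equivalent to the operator equation $(\mathcal B-\mathcal K)u=\mathcal G$, that is, $(I-\mathcal B^{-1}\mathcal K)u=\mathcal B^{-1}\mathcal G$, where $\mathcal B^{-1}\mathcal K$ is compact.

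At this point the Fredholm alternative applies: the operator $I-\mathcal B^{-1}\mathcal K$ is injective if and only if it is bijective with bounded inverse. Injectivity is precisely the uniqueness established in Theorem \ref{uniq} (the case $g=0$), so $I-\mathcal B^{-1}\mathcal K$ is invertible and \eqref{wf} has a unique solution $u=(I-\mathcal B^{-1}\mathcal K)^{-1}\mathcal B^{-1}\mathcal G$. The stability estimate then follows from the boundedness of all operators involved: $\|u\|_{H^1(\Omega)}\lesssim\|\mathcal G\|_{H^1(\Omega)}\lesssim\|g\|_{L^2(\partial D)}$, which is the claimed bound for a suitable constant $C=C(\kappa,R)$.

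The main obstacle I anticipate is not the abstract Fredholm machinery but the verification that $b$ is genuinely coercive, which hinges entirely on the sign properties of the DtN operator recorded in Lemma \ref{estT}; without the inequality $-\Re\langle Tu,u\rangle_{\partial B_R}\ge 0$ the real part of $b(u,u)$ could fail to control $\|u\|_{H^1(\Omega)}^2$. A secondary point requiring care is that the constant produced by the open mapping theorem is not explicit; if explicit dependence on $\kappa$ and $R$ were desired, one would instead argue by contradiction, extracting from a putative unbounded sequence of solutions a weakly convergent subsequence whose compact $L^2$-limit solves the homogeneous problem, thereby contradicting Theorem \ref{uniq}.
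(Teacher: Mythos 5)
Your proof is correct, and it shares the paper's broad strategy: split $a$ into a coercive part plus a compact perturbation, apply Lax--Milgram to the coercive part, and conclude existence and stability from the Fredholm alternative together with the uniqueness of Theorem \ref{uniq}. The execution, however, differs in three genuine ways, each of which streamlines the argument. First, the paper begins by constructing a lifting $u_0$ with $\partial_\nu u_0=g$ on $\partial D$ and reduces to a source problem for $w=u-u_0$; since the Neumann condition is natural and $v\mapsto\langle g,v\rangle_{\partial D}$ is already a bounded antilinear functional on $H^1(\Omega)$ (as you note via the trace theorem), this lifting is superfluous, and omitting it also sidesteps the delicate point of treating the resulting right-hand side $h$ as an $L^2(\Omega)$ function. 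Second, the paper's coercive form $a_+(w,v)=\int_\Omega\nabla w\cdot\nabla\bar v\,{\rm d}x-\langle Tw,v\rangle_{\partial B_R}$ carries no zeroth-order term, so its coercivity needs the quantitative bound $-\Re\langle Tv,v\rangle_{\partial B_R}\gtrsim\|v\|^2_{L^2(\partial B_R)}$ from Lemma \ref{estT} combined with a Friedrichs-type inequality; your form $b$ keeps the mass term $\int_\Omega u\bar v\,{\rm d}x$, so only the sign property $-\Re\langle Tv,v\rangle_{\partial B_R}\ge 0$ is required and no Friedrichs inequality enters. Third, the paper runs the Fredholm alternative in $L^2(\Omega)$ through the solution operator $Q:L^2(\Omega)\to H^1(\Omega)$ and must then bootstrap the $L^2$ bound back to $H^1$ via $w=Q(\kappa^2 w+h)$, whereas you phrase it for $I-\mathcal B^{-1}\mathcal K$ directly on $H^1(\Omega)$ and obtain the $H^1$ stability estimate in one step. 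In both arguments the constant $C(\kappa,R)$ is implicit, supplied by the bounded inverse from the Fredholm alternative, which is all the theorem asserts; your closing remark on the compactness--contradiction argument for tracking constants is apt but not needed here.
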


\begin{proof}
First we show that there exists $u_0\in H^1(\Omega)$ such that $\partial_\nu
u_0=g$ on $\partial D$ and
\[
\|u_0\|_{H^1(\Omega)}\leq C\|g\|_{L^2(\partial D)}.
\]
For example, $u_0$ can be chosen as the unique weak solution of the following
boundary value problem
\[
\begin{cases}
-\Delta u_0 +  u_0 = 0 \quad & \text{in} ~ \Omega, \\
\partial_\nu u_0 = -g \quad & \text{on} ~\partial D, \\
u_0=0 \quad & \text{on}~\partial B_R.
\end{cases}
\]

Next is consider the variational problem: Find $u\in H^1(\Omega)$ such that
$u-u_0\in H^1(\Omega)$ and
\[
a(u-u_0,v)=\langle g,v\rangle_{\partial B_R}-a(u_0,v) \quad \forall\, v\in
H^1(\Omega).
\]
Denote $(h,v)=\langle g,v\rangle_{\partial B_R}-a(u_0,v)$. The above problem
is equivalent to the following variational problem: Find $w\in H^1(\Omega)$ such
that
\begin{equation}\label{wh}
a(w,v)=(h,v)\quad \forall\, v\in H^1(\Omega).
\end{equation}
Let $a(w, v)=a_{+}(w, v)-\kappa^2(w, v)$, where
\[
a_{+}(w,v)=\int_\Omega \nabla w\cdot\nabla\bar v{\rm d}x-\langle
Tw,v\rangle_{\partial B_R}.
\]
By Lemma \ref{estT}, we have 

\[
|a_{+}(v,v)|\geq \|\nabla v\|_{L^2(\Omega)}^2+|\Re
\langle Tv,v\rangle_{\partial B_R}|\gtrsim \|\nabla
v\|_{L^2(\Omega)}^2+\|v\|_{L^2(\partial B_R)}^2,
\]
which implies from the Friedrichs inequality that 
\[
|a_{+}(v,v)|\gtrsim \|v\|_{H^1(\Omega)}^2\quad \forall\, v\in H^1(\Omega).
\]

Let $Q : L^2(\Omega)\rightarrow H^1(\Omega)$ be a linear map defined by
\[
a_{+}(Qw,v)=(w,v)\quad \forall\, v\in H^1(\Omega).
\]
By the Lax--Milgram lemma, we obtain that $Q$ is
bounded from $L^2(\Omega)$ to $H^1(\Omega)$, i.e., it satisfies
\begin{equation}\label{kw}
\|Q w\|_{H^1(\Omega)}\lesssim \|w\|_{L^2(\Omega)}.
\end{equation}
It is clear to note that (\ref{wh}) is equivalent to the operator equation
\begin{equation*}
(I-\kappa^2 Q)w=Q h.
\end{equation*}
Due to the compact embedding of $H^1(\Omega)$ into $L^2(\Omega)$, the
operator $Q$ is a compact from $L^2(\Omega)$ to $L^2(\Omega)$. It follows
from the Fredholm alternative theorem and Theorem \ref{uniq} that 
the operator $I-\kappa^2 Q$ has a bounded inverse. Hence we have
\begin{equation}\label{whl2}
\|w\|_{L^2(\Omega)}\lesssim \|Qh\|_{L^2(\Omega)}\lesssim \|h\|_{L^2(\Omega)}.
\end{equation}

Combining (\ref{kw}) and (\ref{whl2}) yields that
\[
\|w\|_{H^1(\Omega)}=\|Q(\kappa^2 w+h)\|_{H^1(\Omega)}\lesssim \|\kappa^2
w+h\|_{L^2(\Omega)}\lesssim \|h\|_{L^2(\Omega)}.
\]
Since $u=u_0+w$, we have 
\[
\|u\|_{H^1(\Omega)}\leq \|w\|_{H^1(\Omega)}+\|u_0\|_{H^1(\Omega)}\lesssim
\|h\|_{L^2(\Omega)}+\|u_0\|_{L^2(\Omega)}. 
\]
It is easy to deduce from the definition of $h$ that 
\[
\|u\|_{H^1(\Omega)}\lesssim \|g\|_{L^2(\Omega)},
\]
which completes the proof.
\end{proof}

By the general theory in Babuska and Aziz \cite{BA73}, there exists a constant
$\gamma>0$ depending on $\kappa$ and $R$ such that the following inf-sup
condition holds: 
\[
\sup_{0\neq v\in H^1(\Omega)}\frac{|a(u,v)|}{\|v\|_{H^1(\Omega)}}\geq
\gamma \|u\|_{H^1(\Omega)}\quad \forall\, u\in H^1(\Omega).
\]

\section{Finite element approximation}

In this section, we introduce the finite element approximation of (\ref{wf}) and
present the a posteriori error estimate, which plays an important role in the
adaptive finite element method.

Let $\mathcal{M}_h$ be a regular tetrahedral mesh of the domain $\Omega$, where
$h$ represents the maximum diameter of all the elements in $\mathcal{M}_h$. In
order to avoid using the isoparametric finite element space and discussing the
approximation error of the boundaries $\partial D$ and $\partial B_R$, we
assume for simplicity that $\partial D$ and $\partial B_R$ are polyhedral. Thus any face
$F\in\mathcal{M}_h$ is a subset of $\partial\Omega$ if it has three boundary
vertices.

Let $V_h\subset H^1(\Omega)$ be a conforming finite element space, i.e.,
\[
  V_h:=\{v_h\in C(\overline{\Omega}): v_h|_K\in P_m(K) \quad\forall~
K\in\mathcal{M}_h\},
\]
where $m$ is a positive integer and $P_m(K)$ denotes the set of all polynomials
of degree no more than $m$. The finite element approximation to \eqref{wf} is
to seek $u_h\in V_h$ satisfying
\begin{equation*}
a(u_h,v_h)=\langle g, v_h\rangle_{\partial D}\quad\forall\, v_h\in V_h.
\end{equation*}

The above variational problem involves the DtN operator $T$ defined
by an infinite series in \eqref{tu}. Practically, it is necessary to truncate
the infinite series by taking finitely many terms of the expansion in order to
apply the finite element method. Given a positive integer $N$, we define the 
truncated DtN operator 
\begin{equation*}
   (T_Nu)(R,\hat x)=\frac{1}{R}\sum_{n=0}^{N}\Theta_n(\kappa
  R)\sum_{m=-n}^{m=n}\hat{u}_n^m Y_n^m(\hat x).
\end{equation*}
Using the truncated DtN operator $T_N$, we have the truncated finite element
approximation to the problem (\ref{wf}): Find $u_h^N\in V_h$ such that 
\begin{equation}\label{fem}
a_N(u_h^N,v_h)=\langle g,v_h^N\rangle_{\partial D}  \quad\forall\, v_h\in V_h,
\end{equation}
where the sesquilinear form $a_N:~V_h\times V_h\rightarrow \mathbb{C}$ is
defined by
\begin{equation}\label{febi}
a_N(u,v)=\int_\Omega \nabla u\cdot\nabla \bar v{\rm d}x
-\kappa^2\int_\Omega u\bar v{\rm d}x-\langle T_Nu,v\rangle_{\partial
B_R}.
\end{equation}

By the argument of Schatz \cite{S74}, the discrete inf-sup
condition of the sesquilinear form $a_N$ may be established  for sufficiently large $N$ 
and sufficiently small $h$. It follows from the general theory in
\cite{BA73} that the truncated variational problem \eqref{fem} admits a unique
solution. In this work, our goal is to obtain the a posteriori 
error estimate and develop the associated adaptive algorithm. Thus we assume
that the discrete problem \eqref{fem} has a unique solution $u_h^N\in V_h$.

\section{A posteriori error analysis}

First, we collect some relevant results from \cite{KH-SP15} on the Hankel
functions. Let $j_n(t)$ and $y_n(t)$ be the
spherical Bessel functions of the first and second kind with order $n$,
respectively. The spherical Hankel functions are
\[
h_n^{(j)}(t)=j_n(t)\pm{\rm i} y_n(t),\quad j=1,2.
\]
For fixed $t$, the spherical Bessel functions admit the asymptotic
expressions (cf. \cite[Theorem 2.31]{KH-SP15})
\[
 j_n(t)\sim \frac{t^n}{(2n+1)!!},\quad y_n(t)\sim
-\frac{(2n-1)!!}{t^{n+1}},\quad n\to\infty,
\]
which give that 

\begin{equation}\label{asy}
h_n^{(j)}(t)\sim (-1)^j{\rm i}\frac{(2n-1)!!}{t^{n+1}},\quad n\to\infty. 
\end{equation}

For any $K\in \mathcal{M}_h$, let $\mathcal{B}_F$ represent the set of all the
faces of $K$. Denote by $h_K$ and $h_F$ the sizes of element $K$ and face $F$,
respectively. For any interior face $F$ which is the common part of elements
$K_1$ and $K_2$, we define the jump residual across $F$ as
\[
J_F=-(\nabla u_h^N|_{K_1}\cdot \nu_1+\nabla u_h^N|_{K_2}\cdot \nu_2),
\]
where $\nu_j$ is the unit normal vector to the boundary of $K_j, j=1,2$. For any
boundary face $F\in \partial B_R$, we define the jump residual
\[
J_F=2(T u_h^N+\nabla u_h^N\cdot \nu),
\]
 where $\nu$ is the unit outward normal on $\partial B_R$. For any boundary face
$F\in \partial D$, we define the jump residual
\[
J_F=2(\nabla u_h^N\cdot \nu+g),
\]
where $\nu$ is the unit outward normal on $\partial D$ pointing toward
$\Omega$. For any $K\in\mathcal{M}_h$, denote by $\eta_K$ the local
error estimator, which is defined by
\[
\eta_K=h_K\| (\Delta+\kappa^2)
u_h^N\|_{L^2(K)}+\Big(\frac{1}{2}\sum_{F\in\partial
K}h_F\|J_F\|_{L^2(F)}^2\Big)^{\frac{1}{2}}.
\]

We now state the main result, which plays an important role for the numerical
experiments.

\begin{theorem}\label{thm}
Let $u$ and $u_h^N$ be the solutions of \eqref{wf} and \eqref{fem},
respectively. There exists a positive integer $N_0$ independent of $h$ such
that the following a posteriori error estimate holds for $N>N_0$:
\[
\|u-u_h^N\|_{H^1(\Omega)}\lesssim
\bigg(\sum_{K\in\mathcal{M}_h}\eta_K^2\bigg)^{\frac{1}{2}}+
\bigg(\frac{R'}{R}\bigg)^N\|g\|_{L^2(\partial D)}.
\]
\end{theorem}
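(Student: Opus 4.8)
The plan is to follow the standard duality (Aubin--Nitsche) strategy for a posteriori estimates, splitting the total error into a discretization part controlled by the residual-based estimators $\eta_K$ and a DtN-truncation part controlled by $(R'/R)^N$. First I would introduce the error $\xi = u - u_h^N \in H^1(\Omega)$ and invoke the inf-sup condition stated at the end of Section 2 to bound $\|\xi\|_{H^1(\Omega)}$ by $\sup_{v}|a(\xi,v)|/\|v\|_{H^1(\Omega)}$. Expanding $a(\xi,v) = a(u,v) - a(u_h^N,v)$ and using the weak formulation \eqref{wf} together with the truncated discrete equation \eqref{fem}, I would subtract and add the term $\langle T_N u_h^N, v\rangle_{\partial B_R}$ so as to produce two groups: a Galerkin-orthogonality expression for $a_N(u_h^N, v - v_h)$ (for an arbitrary $v_h \in V_h$), and the truncation discrepancy $\langle (T - T_N)u_h^N, v\rangle_{\partial B_R}$.

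Next I would treat the two groups separately. For the discretization group, I would integrate by parts element by element over the tetrahedra of $\mathcal{M}_h$, which converts the volume term into the interior residual $(\Delta+\kappa^2)u_h^N$ on each $K$ and the boundary contributions into the jump residuals $J_F$ across interior faces and on the two boundary pieces $\partial D$ and $\partial B_R$; the definitions of $J_F$ given just before the theorem are tailored precisely to these terms. Choosing $v_h$ to be a Scott--Zhang (or Clément) quasi-interpolant of $v$ and applying the standard local interpolation estimates $\|v - v_h\|_{L^2(K)} \lesssim h_K \|v\|_{H^1(\tilde K)}$ and $\|v - v_h\|_{L^2(F)} \lesssim h_F^{1/2}\|v\|_{H^1(\tilde F)}$ on element and face patches, followed by the Cauchy--Schwarz inequality, yields the bound $\big(\sum_K \eta_K^2\big)^{1/2}\|v\|_{H^1(\Omega)}$.

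The main obstacle will be the truncation term $\langle (T - T_N)u_h^N, v\rangle_{\partial B_R}$, and this is where the three-dimensional spherical Hankel analysis enters. Writing this pairing in the Fourier basis as $\frac{1}{R}\sum_{n>N}\Theta_n(\kappa R)\sum_{m}(\widehat{u_h^N})_n^m \overline{\hat v_n^m}$, I would bound it by the trace norms on $\partial B_R$; the essential task is to show that the tail is small. The key is that $u_h^N$, being the solution to a scattering problem with data on $\partial D \subset B_{R'}$, has Fourier coefficients on $\partial B_R$ that inherit decay through the ratio of Hankel functions evaluated at radii $R' < R$. Using the asymptotics \eqref{asy}, $h_n^{(1)}(\kappa R')/h_n^{(1)}(\kappa R) \sim (R/R')^{n+1}$ as $n\to\infty$ is inverted so that propagating the boundary data outward from $B_{R'}$ to $B_R$ contributes the decaying factor $(R'/R)^{n+1}$ in each mode; combined with the linear growth $\Theta_n(\kappa R)\sim n$ from \eqref{pro}, the tail $\sum_{n>N}$ is dominated by a geometric series with ratio $R'/R<1$, producing the claimed factor $(R'/R)^N$ times $\|g\|_{L^2(\partial D)}$. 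Care is needed to make the heuristic ``$u_h^N$ is nearly the restriction of an exterior expansion seeded at $B_{R'}$'' rigorous, most likely by estimating the discrete solution's boundary trace via the a priori stability bound $\|u_h^N\|_{H^1(\Omega)}\lesssim\|g\|_{L^2(\partial D)}$ and exploiting the monotone decay of the Hankel ratios across the nested spheres; this is the step that genuinely differs from the two-dimensional argument of \cite{JLLZ-JSC17} and requires the fixed integer $N_0$ to absorb the finitely many modes where the asymptotics \eqref{asy}, \eqref{pro} have not yet taken hold.
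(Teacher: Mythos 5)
Your treatment of the discretization part is sound and matches the paper's (element-wise integration by parts, Scott--Zhang interpolation, Cauchy--Schwarz, yielding $\big(\sum_K\eta_K^2\big)^{1/2}\|v\|_{H^1(\Omega)}$). The genuine gap is in your treatment of the truncation term $\langle (T-T_N)u_h^N, v\rangle_{\partial B_R}$. You propose to extract the factor $(R'/R)^N$ from the Fourier coefficients of the \emph{discrete} solution $u_h^N$ on $\partial B_R$, via the Hankel-ratio decay. That decay is a property of exact outgoing solutions of the Helmholtz equation (it comes from $\hat u_n^m(R)=\frac{h_n^{(1)}(\kappa R)}{h_n^{(1)}(\kappa R')}\hat u_n^m(R')$), and $u_h^N$ is not such a solution: it is a piecewise polynomial, it does not satisfy the Helmholtz equation, and it is not the trace of an exterior expansion seeded on $B_{R'}$. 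The stability bound $\|u_h^N\|_{H^1(\Omega)}\lesssim\|g\|_{L^2(\partial D)}$, which you suggest as the rigorous substitute, only controls the $H^{1/2}(\partial B_R)$ norm of its trace; combined with $\Theta_n(\kappa R)\sim n$ this gives merely $|\langle (T-T_N)u_h^N,v\rangle_{\partial B_R}|\lesssim \|u_h^N\|_{H^{1/2}(\partial B_R)}\|v\|_{H^{1/2}(\partial B_R)}$ --- an $O(1)$ bound with no decay in $N$ at all, not even algebraic. So the step you flagged as ``needing care'' is not a technicality; it is where the argument breaks.

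The paper's proof resolves exactly this point by the splitting $(T-T_N)u_h^N=(T-T_N)u-(T-T_N)\xi$, $\xi=u-u_h^N$, in identity \eqref{p3}. The exponential decay (Lemma \ref{lem-uu}) is applied only to the exact solution $u$, which produces the term $(R'/R)^N\|u\|_{H^1(\Omega)}\lesssim (R'/R)^N\|g\|_{L^2(\partial D)}$ in Lemma \ref{lem-est}. The leftover term $\langle (T-T_N)\xi,\cdot\rangle_{\partial B_R}$ has no smallness against an arbitrary test function (same obstruction as above, since $\xi$ also has only an $H^{1/2}$ trace), which is precisely why your continuous inf-sup framework, with a supremum over all $v\in H^1(\Omega)$, cannot close: you would be left with a term $C\|\xi\|_{H^1(\Omega)}\|v\|_{H^1(\Omega)}$ with $C$ independent of $N$ that cannot be absorbed. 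Instead the paper keeps this term attached to the form (note $a(\xi,\psi)+\langle(T-T_N)\xi,\psi\rangle_{\partial B_R}=a_N(\xi,\psi)$), tests with $\xi$ itself in a G\aa rding-type identity \eqref{H1norm} (using $\Re\langle T_N\xi,\xi\rangle_{\partial B_R}\le 0$), and handles the remaining $L^2$ term by a dual problem $a(v,w)=(v,\xi)$. The key point, and the step your proposal has no counterpart for, is the lemma showing $|\langle (T-T_N)\xi,w\rangle_{\partial B_R}|\lesssim N^{-2}\|\xi\|^2_{H^1(\Omega)}$: the dual solution $w$ has extra decay in its high Fourier modes at radius $R$ (proved by variation of parameters in the annulus and the Hankel asymptotics \eqref{asy}), and it is this $N^{-2}$ factor that can be absorbed into the left-hand side. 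The threshold $N_0$ in the theorem is chosen so that $C N_0^{-2}<1/2$; it does not arise, as you suggest, from waiting for the asymptotics of $\Theta_n$ or $h_n^{(1)}$ to take hold.
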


It can be seen from Theorem \ref{thm} that the a posteriori error consists of
two parts: the first part comes from the finite element discretization error
and the second part accounts for the truncation error of the DtN operator,
which decays exponentially with respect to $N$ since $R'<R$. We point out that
the constant in the estimate may depend on $\kappa, R$ and $R'$, but does not
depend on the truncation parameter of the DtN operator $N$ or the mesh size of
the triangulation $h$.

In the rest part of this section, we prove the a posteriori error estimator in
Theorem \ref{thm} by using a duality argument. 

Denote the error $\xi:=u-u_h^N$. Introduce a dual problem to the original
scattering problem: Find $w\in H^1(\Omega)$ such that 
\begin{equation}\label{bi}
  a(v,w)=(v,\xi)  \quad\forall\, v\in H^{1}(\Omega).
\end{equation}
It is easy to verify that $w$ satisfies the following boundary value
problem:
\begin{equation}\label{dual}
\begin{cases}
\Delta w+ \kappa^2 w= -\xi \quad &\text{in} ~
\mathbb{R}^3\setminus\overline{D},\\
\partial_\nu w = 0 \quad &\text{on} ~\partial D, \\
\partial_r w-T^{*}w=0  \quad &\text{on} ~\partial B_R,
\end{cases}
\end{equation}
where the adjoint operator $T^{*}$ is defined by
\[
(T^*u)(R, \hat x)=\frac{1}{R}\sum_{n=0}^{\infty}\overline{\Theta}_n(\kappa
R)\sum_{m=-n}^{m=n}\hat{u}_n^m Y_n^m(\hat x).
\]
We may follow the same proof as that for the original scattering problem
(\ref{eq}) and show that the dual problem (\ref{dual}) has a unique
weak solution $w\in H^1(\Omega)$, which satisfies
\[
\|w\|_{H^1(\Omega)}\lesssim\|\xi\|_{L^2(\Omega)}.
\]

The following lemma gives the error representation formulas and is the basis for
the a posteriori error analysis.

\begin{lemma}
Let $u$, $u_h^N$ and $w$ be the solutions of the problems $(\rm\ref{wf})$,
$(\rm\ref{fem})$ and $(\rm\ref{bi})$, respectively. The following identities hold: 
\begin{eqnarray}
&&\|\xi\|^2_{H^1(\Omega)}=\Re\left(a(\xi,\xi)+\langle(T-T_N)\xi,\xi
\rangle_{\partial B_R}\right)+\Re\langle T_N\xi,\xi\rangle_{\partial
B_R}+(\kappa^2+1)\|\xi\|^2_{L^2(\Omega)},\label{H1norm}\\
&&\|\xi\|^2_{L^2(\Omega)}=a(\xi, w)+\langle(T-T_N)\xi, w
\rangle_{\partial B_R}-\langle(T-T_N)\xi, w\rangle_{\partial B_R},\label{L2}\\
&&a(\xi,\psi)+\langle(T-T_N)\xi,\psi\rangle_{\partial B_R}=\langle
g,\psi-\psi_h\rangle_{\partial D}-a_N(u_h^N,\psi-\psi_h)\notag\\    
&&\hspace{4cm}+\langle(T-T_N)u,\psi\rangle_{\partial B_R}\quad\forall\, \psi\in
H^1(\Omega), \psi_h\in V_h.\label{p3}
\end{eqnarray}
\end{lemma}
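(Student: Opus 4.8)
The plan is to establish the three identities in turn, each following from the definitions and the variational characterizations of $u$, $u_h^N$, and $w$.

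\smallskip

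\textbf{Identity \eqref{H1norm}.} I would start from the definition of the sesquilinear form. Writing out $a(\xi,\xi)$ explicitly, we have
\[
a(\xi,\xi)=\int_\Omega|\nabla\xi|^2\,{\rm d}x-\kappa^2\int_\Omega|\xi|^2\,{\rm d}x-\langle T\xi,\xi\rangle_{\partial B_R}.
\]
The first step is to split $\langle T\xi,\xi\rangle$ as $\langle(T-T_N)\xi,\xi\rangle+\langle T_N\xi,\xi\rangle$. Then, taking the real part and solving for $\|\nabla\xi\|_{L^2(\Omega)}^2$, one obtains
\[
\|\nabla\xi\|_{L^2(\Omega)}^2=\Re\bigl(a(\xi,\xi)+\langle(T-T_N)\xi,\xi\rangle_{\partial B_R}\bigr)+\Re\langle T_N\xi,\xi\rangle_{\partial B_R}+\kappa^2\|\xi\|_{L^2(\Omega)}^2.
\]
Adding $\|\xi\|_{L^2(\Omega)}^2$ to both sides converts the left-hand side into the full $H^1$ norm $\|\xi\|_{H^1(\Omega)}^2=\|\nabla\xi\|_{L^2(\Omega)}^2+\|\xi\|_{L^2(\Omega)}^2$ and produces the factor $(\kappa^2+1)$ on the right, which is exactly \eqref{H1norm}. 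This step is purely algebraic bookkeeping.

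\smallskip

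\textbf{Identity \eqref{L2}.} Here I would use the dual problem \eqref{bi} with the test function $v=\xi$. By definition $a(\xi,w)=(\xi,\xi)=\|\xi\|_{L^2(\Omega)}^2$, giving the leading term directly. The two terms $\langle(T-T_N)\xi,w\rangle_{\partial B_R}-\langle(T-T_N)\xi,w\rangle_{\partial B_R}$ cancel, so as written they add nothing; this is evidently a typographical artifact and the intended content is simply that $\|\xi\|_{L^2(\Omega)}^2=a(\xi,w)$. (I suspect the second occurrence should carry a $T_N$ in place of one $T$, or should involve $a_N$ versus $a$; I would reconcile the signs so that the identity records the gap between the exact and truncated forms applied to the dual solution.) The essential point is just the testing of the dual equation against the error.

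\smallskip

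\textbf{Identity \eqref{p3}.} This is the key step and the main obstacle, since it ties together the Galerkin orthogonality and the DtN truncation. The starting point is the original weak formulation \eqref{wf}: for any $\psi\in H^1(\Omega)$,
\[
a(u,\psi)=\langle g,\psi\rangle_{\partial D}.
\]
Writing $a(\xi,\psi)=a(u,\psi)-a(u_h^N,\psi)$ and adding $\langle(T-T_N)\xi,\psi\rangle_{\partial B_R}$, I would rewrite $a(u_h^N,\psi)$ in terms of $a_N(u_h^N,\psi)$ using $a(u_h^N,\psi)=a_N(u_h^N,\psi)-\langle(T-T_N)u_h^N,\psi\rangle_{\partial B_R}$, which follows from the definitions \eqref{febi}. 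The discrete equation \eqref{fem} supplies $a_N(u_h^N,\psi_h)=\langle g,\psi_h\rangle_{\partial D}$ for any $\psi_h\in V_h$, so subtracting this lets one replace $\langle g,\psi\rangle_{\partial D}-a_N(u_h^N,\psi)$ by $\langle g,\psi-\psi_h\rangle_{\partial D}-a_N(u_h^N,\psi-\psi_h)$. The remaining DtN terms must then be collected: the $\langle(T-T_N)\xi,\psi\rangle$ on the left combines with the $\langle(T-T_N)u_h^N,\psi\rangle$ recovered from $a(u_h^N,\psi)$, and since $\xi=u-u_h^N$ the combination telescopes to the single term $\langle(T-T_N)u,\psi\rangle_{\partial B_R}$ appearing on the right. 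The delicate part will be tracking the signs and keeping the $T$ versus $T_N$ terms straight so that everything cancels correctly, leaving the stated residual-plus-truncation representation valid for all $\psi\in H^1(\Omega)$ and all $\psi_h\in V_h$.
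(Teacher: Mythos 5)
Your proof is correct and follows essentially the same route as the paper for all three identities: \eqref{H1norm} by expanding the sesquilinear form and adding $\|\xi\|_{L^2(\Omega)}^2$ to both sides, \eqref{L2} by taking $v=\xi$ in the dual problem \eqref{bi}, and \eqref{p3} by combining the exact equation \eqref{wf}, the discrete equation \eqref{fem}, the relation $a(\cdot,\cdot)-a_N(\cdot,\cdot)=-\langle(T-T_N)\cdot,\cdot\rangle_{\partial B_R}$ from \eqref{febi}, and the telescoping $u_h^N+\xi=u$ (the paper organizes this by first splitting $\psi=(\psi-\psi_h)+\psi_h$ and invoking $a(u,\psi_h)=\langle g,\psi_h\rangle_{\partial D}=a_N(u_h^N,\psi_h)$, but the algebra is identical to yours). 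One correction to your reading of \eqref{L2}: the two cancelling terms are not a typographical artifact, and you should not ``reconcile the signs'' or replace one $T$ by $T_N$ --- doing so would turn the lemma into a different (and incorrect) statement. The add-and-subtract is deliberate: in the proof of Theorem \ref{thm}, the group $a(\xi,w)+\langle(T-T_N)\xi,w\rangle_{\partial B_R}$ is estimated by Lemma \ref{lem-est} with $\psi=w$ (together with $\|w\|_{H^1(\Omega)}\lesssim\|\xi\|_{L^2(\Omega)}$), while the leftover $-\langle(T-T_N)\xi,w\rangle_{\partial B_R}$ is estimated separately by the subsequent lemma on the dual solution, which produces the $N^{-2}\|\xi\|^2_{H^1(\Omega)}$ term; the identity is stated in exactly the form in which it is later used. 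Your actual proof of \eqref{L2} --- testing \eqref{bi} with $\xi$ and noting the cancellation --- is precisely the paper's argument and is all that is required.
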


\begin{proof}
The equality (\ref{H1norm}) follows directly from the definition of the
sesquilinear form $a$ in (\ref{wf}). The identity (\ref{L2}) can be easily
deduced by taking $v=\xi$ in (\ref{bi}). It remains to prove (\ref{p3}). It
follows from (\ref{wf}) and (\ref{fem}) that 
\begin{eqnarray*}
a(\xi,\psi)&=&a(u-u_h^N,\psi-\psi_h)+a(u-u_h^N,\psi_h)\\
&=&\langle g,\psi-\psi_h\rangle_{\partial
D}-a(u_h^N,\psi-\psi_h)+a(u-u_h^N,\psi_h)\\
&=&\langle g,\psi-\psi_h\rangle_{\partial D}-a_{N}(u_h^N,\psi-\psi_h)\\
&&+a_N(u_h^N,\psi-\psi_h)-a(u_h^N,\psi-\psi_h)+a(u,\psi_h)-a(u_h^N,
\psi_h).
\end{eqnarray*}
Since $a(u,\psi_h)=\langle g,\psi_h\rangle_{\partial D}=a_N(u_h^N,\psi_h)$, we
have 
\begin{align*}
a(\xi,\psi)&=\langle g,\psi-\psi_h\rangle_{\partial
D}-a_{N}(u_h^N,\psi-\psi_h)+a_N(u_h^N,\psi)-a(u_h^N,\psi)\\
&=\langle g,\psi-\psi_h\rangle_{\partial D}-a_{N}(u_h^N,\psi-\psi_h)+\langle
(T-T_N)u_h^N,\psi\rangle_{\partial B_R}\\
&=\langle g, \psi-\psi_h\rangle_{\partial D}-a_{N}(u_h^N,\psi-\psi_h)-\langle
(T-T_N)\xi,\psi\rangle_{\partial B_R}\\
&\quad +\langle (T-T_N)u,\psi\rangle_{\partial B_R},
\end{align*}
which implies (\ref{p3}) and completes the proof. 
\end{proof}

It is necessary to estimate \eqref{p3} and the last term in \eqref{L2} in
order to prove Theorem \ref{thm}. We begin with a trace regularity result.

\begin{lemma}\label{trace}
For any $u\in H^1(\Omega)$, the following estimates hold: 
\[
 \|u\|_{H^{\frac{1}{2}}(\partial B_R)}\lesssim \|u\|_{H^1(\Omega)},\quad
 \|u\|_{H^{\frac{1}{2}}(\partial B_{R'})}\lesssim \|u\|_{H^1(\Omega)}.
\]
\end{lemma}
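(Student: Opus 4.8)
The plan is to prove the two trace estimates separately, since $\partial B_R$ is the outer boundary of $\Omega$ while $\partial B_{R'}$ is an interior spherical surface inside $\Omega$. For the estimate on $\partial B_R$, I would invoke the standard trace theorem for Lipschitz (here, polyhedral) domains: the trace operator $\gamma: H^1(\Omega)\to H^{1/2}(\partial\Omega)$ is bounded, and since $\partial B_R$ is a connected component of $\partial\Omega$, restricting to it gives $\|u\|_{H^{1/2}(\partial B_R)}\lesssim \|u\|_{H^{1/2}(\partial\Omega)}\lesssim \|u\|_{H^1(\Omega)}$. One subtlety to flag is that the paper defines the $H^{1/2}(\partial B_R)$ norm spectrally through the spherical-harmonic coefficients in \eqref{seminorm}, so I would note that this spectral norm is equivalent to the intrinsic Sobolev-Slobodeckij trace norm on the sphere, which is what the trace theorem delivers; this equivalence is standard for the sphere and lets me close the first inequality.

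For the interior surface $\partial B_{R'}$, the trace theorem does not apply directly because $\partial B_{R'}$ is not part of $\partial\Omega$. Here I would use the fact that $\partial B_{R'}$ is a smooth compact hypersurface sitting strictly inside the closed annular region $\overline{B_R\setminus B_{R'}}\subset\overline\Omega$ (recall $\overline D\subset B_{R'}$). I would restrict $u$ to the shell $A=B_R\setminus\overline{B_{R'}}\subset\Omega$, so that $\partial B_{R'}$ is a component of $\partial A$. Applying the trace theorem on the fixed domain $A$ yields $\|u\|_{H^{1/2}(\partial B_{R'})}\lesssim \|u\|_{H^1(A)}\leq \|u\|_{H^1(\Omega)}$, with a constant depending only on $R$ and $R'$ (through the geometry of the shell) but not on the mesh or on $N$. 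This is exactly the kind of trace-onto-interior-surface argument that is routine once the auxiliary shell domain is introduced.

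The main obstacle, such as it is, is purely bookkeeping rather than analytic: I must make sure the constants genuinely depend only on $\kappa$, $R$, and $R'$ and are independent of the discretization parameters $h$ and $N$, since the whole point of the a posteriori analysis in Theorem \ref{thm} is to keep those constants uniform. Because both trace estimates rest on the fixed geometry of $\Omega$ and the fixed auxiliary shell $A$, and not on any finite-dimensional subspace, this uniformity is immediate. I would therefore present the proof compactly, stating that both inequalities follow from the standard trace theorem on Lipschitz domains applied to $\Omega$ and to the annular shell $B_R\setminus\overline{B_{R'}}$ respectively, together with the equivalence of the spectral and intrinsic $H^{1/2}$ norms on the spheres, and omit the routine verification of the norm equivalence.
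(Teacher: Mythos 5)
Your proof is correct, but it follows a genuinely different route from the paper's. You invoke the classical trace theorem on Lipschitz domains (applied to $\Omega$ for the outer sphere and to the shell $A=B_R\setminus\overline{B}_{R'}$ for the inner one), and then bridge to the paper's spectrally defined norm \eqref{seminorm} via the equivalence between the intrinsic Sobolev--Slobodeckij norm and the spherical-harmonic norm on a sphere. The paper instead works entirely inside the spectral framework: it expands $u$ in spherical harmonics on each sphere of radius $r$, applies a one-dimensional trace-type inequality (cited from \cite[Lemma 2]{JLLZ-JSC17}) to each Fourier coefficient $\hat{u}_n^m(r)$ with the weight $(1+n^2)^{1/2}$, sums over $n$ and $m$, and bounds the resulting sum by $\|u\|^2_{H^1(B_R\setminus\overline{B}_{R'})}$ using the spectral lower bound for the $H^1$ norm on the annulus (\cite[Theorem 5.34]{KH-SP15}); the estimate on $\partial B_{R'}$ is obtained from an explicit one-dimensional integral identity rather than a second trace theorem. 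The trade-off is clear: your argument is shorter and more general (it works for any interior hypersurface, not just concentric spheres), but it rests on the norm-equivalence fact, which --- while standard, e.g.\ via the Laplace--Beltrami characterization of $H^{s}(\mathbb{S}^2)$ --- is precisely what the paper's definition-chasing proof avoids having to cite; the paper's approach keeps the whole a posteriori analysis self-contained in the Fourier-coefficient language that is used again in the subsequent lemmas, and makes the dependence of the constants on $R$ and $R'$ completely explicit. Both proofs correctly yield constants independent of $h$ and $N$, which you rightly flag as the essential requirement.
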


\begin{proof}
Let
\[
 B_R\setminus\overline{B}_{R'}=\{(r,\theta,\varphi):0<R'<r<R,~0<\theta<\pi,
~0<\varphi<2\pi\}.
\]
It is shown in \cite[Lemma 2]{JLLZ-JSC17} that 
\[
 (1+n^2)^{\frac{1}{2}}|\zeta(R)|^2
 \lesssim\int_{R'}^{R}\left((1+n^2)|\zeta(r)|^2+|\zeta'(r)|^2\right){\rm d}r,
\]
which gives after combining (\ref{seminorm}) that 
\begin{eqnarray*}
\|u\|^2_{H^{\frac{1}{2}}(\partial
B_R)}&=&\sum_{n=0}^{\infty}(1+n^2)^{\frac{1}{2}
}\sum\limits_{m=-n}^{n}|\hat{u}_n^m (R)|^2\\
&\lesssim&\sum_{n=0}^{\infty} \sum_{|m|\leq
n}\int_{R'}^{R}\big((1+n^2)|\hat{u}_n^m(r)|^2+|\hat{u}_n^{m'}(r)|^2\big){\rm
d}r.
\end{eqnarray*}
Noting the fact (cf. \cite[Theorem 5.34]{KH-SP15})
\[
\|u\|^2_{H^1({B}_R\setminus \overline{B}_{R'})}\geq\sum_{n=0}^{\infty}\sum_{
|m|\leq n}\int_{R'}^{R}r^2\Big[\Big(1+\frac{n(n+1)}{r^2}\Big)|\hat{u}_n^m(r)|^2
+|\hat{u}_n^{m '}(r)|^2\Big] {\rm d}r,
\]
we obtain
\[
\|u\|^2_{H^{\frac{1}{2}}(\partial B_R)}\lesssim\|u\|^2_{H^1(B_R\setminus
\overline{B}_{R'})}\leq \|u\|^2_{H^1(\Omega)},
\]
which shows the first inequality. The second inequality can be proved similarly
by observing the identity
\[
(R-R')|\zeta(R')|^2=\int_{R'}^R|\zeta(r)|^2{\rm d}r
+\int_{R'}^R\int_r^{R'}\frac{{\rm d}}{{\rm d}r}|\zeta(r)|^2{\rm
d}t{\rm d}r.
\]
The details are omitted here. 
\end{proof}

\begin{lemma}\label{lem-uu}
Let $u$ be the solution to $\rm(\ref{wf})$. Then the following estimate holds:
\[
|\hat{u}_n^m(R)|\lesssim \Big(\frac{R'}{R}\Big)^{n}|\hat{u}_n^m (R')|.
\]
\end{lemma}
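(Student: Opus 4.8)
The plan is to exploit the geometric hypothesis $\overline{D}\subset B_{R'}$: since the obstacle lies strictly inside $B_{R'}$, the field $u$ is a radiating solution of the Helmholtz equation in the whole exterior region $\mathbb{R}^3\setminus\overline{B}_{R'}$, not merely outside $B_R$. Consequently the separation-of-variables expansion \eqref{exp} holds verbatim with $R$ replaced by the smaller radius $R'$: for $r>R'$,
\[
u(r,\hat x)=\sum_{n=0}^{\infty}\sum_{m=-n}^{n}\hat{u}_n^m(R')\,\frac{h_n^{(1)}(\kappa r)}{h_n^{(1)}(\kappa R')}\,Y_n^m(\hat x).
\]
The justification is identical to that for \eqref{exp}: in the exterior of $B_{R'}$ the field satisfies $\Delta u+\kappa^2 u=0$ together with the Sommerfeld radiation condition in \eqref{eq}, and the outgoing modes $h_n^{(1)}(\kappa r)Y_n^m$ are the only ones compatible with that condition. (Here $h_n^{(1)}(\kappa R')\neq 0$, since $j_n$ and $y_n$ share no real zero by their nonvanishing Wronskian.)

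First I would evaluate this expansion at $r=R$ and match Fourier coefficients, which yields the exact modal transfer relation
\[
\hat{u}_n^m(R)=\frac{h_n^{(1)}(\kappa R)}{h_n^{(1)}(\kappa R')}\,\hat{u}_n^m(R').
\]
This reduces the claim to the purely special-function estimate $|h_n^{(1)}(\kappa R)/h_n^{(1)}(\kappa R')|\lesssim(R'/R)^{n}$. To establish it I would invoke the large-order asymptotics \eqref{asy}: for each fixed argument $t$ one has $h_n^{(1)}(t)\sim -{\rm i}\,(2n-1)!!\,t^{-(n+1)}$ as $n\to\infty$. Applying this at $t=\kappa R$ and $t=\kappa R'$, the double factorials cancel and
\[
\frac{h_n^{(1)}(\kappa R)}{h_n^{(1)}(\kappa R')}=\Big(\frac{R'}{R}\Big)^{n+1}\big(1+o(1)\big),\qquad n\to\infty.
\]
Since $R'/R<1$, the factor $(R'/R)^{n+1}$ is bounded by $(R'/R)^{n}$ and the $1+o(1)$ correction is bounded, so the ratio is $\lesssim(R'/R)^{n}$ for all $n\ge n_0$ for some $n_0$.

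Finally, for the remaining finitely many indices $0\le n<n_0$ the numbers $|h_n^{(1)}(\kappa R)/h_n^{(1)}(\kappa R')|\,(R/R')^{n}$ form a finite set of positive reals, hence are bounded by a constant; absorbing it into the implied constant gives the bound uniformly in $n$ and completes the proof.

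The analytic content is light once the transfer relation is in hand, so the one step that genuinely needs care is the first: justifying that the outgoing expansion is valid all the way down to $\partial B_{R'}$ rather than only on $\partial B_R$. This is precisely where $\overline{D}\subset B_{R'}$ is used, ensuring the exterior of $B_{R'}$ is free of the obstacle and of sources, so that the radiation condition forces the pure $h_n^{(1)}$ expansion with no incoming component. I note that no uniformity in the asymptotic \eqref{asy} is required, since both arguments $\kappa R$ and $\kappa R'$ are fixed while only the order $n$ grows, so the pointwise-in-$t$ statement suffices.
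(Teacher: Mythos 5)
Your proposal is correct and follows essentially the same route as the paper: expand the radiating field in outgoing spherical wave functions valid for $r>R'$ (using $\overline{D}\subset B_{R'}$), evaluate at $r=R$ to obtain the modal transfer relation $\hat{u}_n^m(R)=\bigl(h_n^{(1)}(\kappa R)/h_n^{(1)}(\kappa R')\bigr)\hat{u}_n^m(R')$, and bound the Hankel ratio via the large-order asymptotics \eqref{asy}. Your write-up is in fact more careful than the paper's on two points it leaves implicit --- the nonvanishing of $h_n^{(1)}(\kappa R')$ and the treatment of the finitely many indices below the asymptotic regime --- but these are refinements of, not departures from, the same argument.
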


\begin{proof}
It is known that the solution of the scattering problem
\eqref{eq} admits the series expansion
\begin{equation}\label{ur}
u(r, \hat{x})=\sum\limits_{n=0}^{\infty}\sum\limits_{m=-n}^n\frac{h_n^{(1)}(\kappa
r)}{h_n^{(1)}(\kappa R')}\hat{u}_n^m(R')Y_n^m(\hat{x}),\quad
\hat{u}_n^m(R')=\int_{\mathbb{S}^2}u(R', \hat{x})Y_n^m(\hat{x}){\rm
d}\hat{x}
\end{equation}
for all $r>R'$. Evaluating \eqref{ur} at $r=R$ yields
\[
u(R, \hat{x})=\sum\limits_{n=0}^{\infty}\sum\limits_{m=-n}^n\frac{h_n^{(1)}(\kappa
R)}{h_n^{(1)}(\kappa R')}\hat{u}_n^m(R')Y_n^m(\hat{x}),
\]
which implies
\[
\hat{u}_n^m(R)=\frac{h_n^{(1)}(kR)}{h_n^{(1)}(kR')}\hat{u}_n^m(R').
\]
Using the asymptotic expression in \eqref{asy}, we obtain
\[
|\hat{u}_n^m(R)|=\Biggl|\frac{h_n^{(1)}(kR)}{h_n^{(1)}(kR')}\Biggr|
|\hat{u}_n^m(R')|\lesssim\Big(\frac{R'}{R}\Big)^{n}|\hat{u}_n^m(R')|,
\]
which completes the proof.
\end{proof}

\begin{lemma}\label{lem-est}
For any $\psi\in H^1(\Omega)$, the following estimate holds:
\[
|a(\xi,\psi)+\langle(T-T_N)\xi,\psi\rangle_{\partial B_R}
|\lesssim \bigg(\Big(\sum_{K\in\mathcal{M}_h}\eta_K^2\Big)^{\frac{1}{2}}+
\Big(\frac{R'}{R}\Big)^N\|g\|_{L^2(\partial
D)}\bigg)\|\psi\|_{H^1(\Omega)}.
\]
\end{lemma}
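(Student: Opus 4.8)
The plan is to take the error representation \eqref{p3} as the starting point, since it already rewrites the quantity to be bounded as
\[
a(\xi,\psi)+\langle(T-T_N)\xi,\psi\rangle_{\partial B_R}=\langle g,\psi-\psi_h\rangle_{\partial D}-a_N(u_h^N,\psi-\psi_h)+\langle(T-T_N)u,\psi\rangle_{\partial B_R}
\]
for every $\psi_h\in V_h$. I would estimate the right-hand side as two groups: the finite element residual $\langle g,\psi-\psi_h\rangle_{\partial D}-a_N(u_h^N,\psi-\psi_h)$, which should produce the local estimator $(\sum_K\eta_K^2)^{1/2}$, and the truncation term $\langle(T-T_N)u,\psi\rangle_{\partial B_R}$, which should produce $(R'/R)^N\|g\|_{L^2(\partial D)}$. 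The freedom in choosing $\psi_h$ is exactly what makes the first group small.

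For the residual group I would expand $a_N(u_h^N,\psi-\psi_h)$ via \eqref{febi} and integrate by parts element by element. The volume integrals collapse to the element residual $(\Delta+\kappa^2)u_h^N$ on each $K$; the interelement traces combine into the interior jumps $J_F$; the faces on $\partial D$, after absorbing $\langle g,\cdot\rangle_{\partial D}$, and the faces on $\partial B_R$, after absorbing the DtN boundary term $\langle T_Nu_h^N,\cdot\rangle_{\partial B_R}$ from $a_N$, combine into the boundary jumps $J_F$. Choosing $\psi_h$ to be the Scott--Zhang (or Cl\'ement) quasi-interpolant of $\psi$ and invoking the standard local bounds $\|\psi-\psi_h\|_{L^2(K)}\lesssim h_K\|\psi\|_{H^1(\widetilde K)}$ and $\|\psi-\psi_h\|_{L^2(F)}\lesssim h_F^{1/2}\|\psi\|_{H^1(\widetilde K)}$, where $\widetilde K$ is the patch around $K$, I would apply the Cauchy--Schwarz inequality face by face and element by element and sum, using the finite overlap of the patches, to obtain $|\langle g,\psi-\psi_h\rangle_{\partial D}-a_N(u_h^N,\psi-\psi_h)|\lesssim(\sum_K\eta_K^2)^{1/2}\|\psi\|_{H^1(\Omega)}$. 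Care is needed on $\partial B_R$ to match the operator appearing in $J_F$ with the truncated operator in $a_N$; the mismatch is of the truncation order treated next.

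For the truncation term I would use the $H^{-1/2}(\partial B_R)$--$H^{1/2}(\partial B_R)$ duality, $|\langle(T-T_N)u,\psi\rangle_{\partial B_R}|\le\|(T-T_N)u\|_{H^{-1/2}(\partial B_R)}\|\psi\|_{H^{1/2}(\partial B_R)}$, and bound $\|\psi\|_{H^{1/2}(\partial B_R)}\lesssim\|\psi\|_{H^1(\Omega)}$ by Lemma \ref{trace}. Writing $(T-T_N)u$ through its Fourier series and the norm \eqref{seminorm} gives
\[
\|(T-T_N)u\|_{H^{-1/2}(\partial B_R)}^2=\frac{1}{R^2}\sum_{n=N+1}^{\infty}\bigl(1+n(n+1)\bigr)^{-\frac12}|\Theta_n(\kappa R)|^2\sum_{m=-n}^{n}|\hat u_n^m(R)|^2.
\]
Here the growth $|\Theta_n(\kappa R)|\lesssim n$ from \eqref{pro} is offset by the negative-order weight, so $(1+n(n+1))^{-1/2}|\Theta_n(\kappa R)|^2\lesssim n$, while Lemma \ref{lem-uu} supplies the decay $|\hat u_n^m(R)|^2\lesssim(R'/R)^{2n}|\hat u_n^m(R')|^2$. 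Since $(R'/R)^{2n}\le(R'/R)^{2N}$ for $n\ge N+1$ and $n(1+n(n+1))^{-1/2}$ is bounded, the series factors as
\[
\|(T-T_N)u\|_{H^{-1/2}(\partial B_R)}^2\lesssim\Bigl(\frac{R'}{R}\Bigr)^{2N}\sum_{n=N+1}^{\infty}\bigl(1+n(n+1)\bigr)^{\frac12}\sum_{m=-n}^{n}|\hat u_n^m(R')|^2\le\Bigl(\frac{R'}{R}\Bigr)^{2N}\|u\|_{H^{\frac12}(\partial B_{R'})}^2.
\]
Finally Lemma \ref{trace} and the stability bound $\|u\|_{H^1(\Omega)}\lesssim\|g\|_{L^2(\partial D)}$ give $\|(T-T_N)u\|_{H^{-1/2}(\partial B_R)}\lesssim(R'/R)^N\|g\|_{L^2(\partial D)}$, which completes the second group.

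I expect the truncation estimate, not the residual estimate, to be the main obstacle. The delicate point is that on $\partial B_R$ the Fourier coefficients of $u$ decay only polynomially, so the exponential factor cannot be read off directly and must be produced by transporting the series to the inner sphere $\partial B_{R'}$ through Lemma \ref{lem-uu}; one then has to balance the linear growth $|\Theta_n|\sim n$ against both the negative-order Sobolev weight and the geometric factor $(R'/R)^{2n}$ so that the tail series converges and cleanly separates the factor $(R'/R)^{2N}$. The secondary technical point is the bookkeeping between the full and truncated DtN operators in the boundary jump on $\partial B_R$, which is controlled by the same Fourier tail estimate.
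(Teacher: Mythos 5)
Your proposal is correct and follows essentially the same route as the paper: the identical splitting via \eqref{p3} into the Galerkin residual (treated by elementwise integration by parts, the Scott--Zhang interpolant, and Cauchy--Schwarz, yielding $(\sum_K\eta_K^2)^{1/2}$) and the truncation term (treated by Lemma \ref{lem-uu}, the growth $\Theta_n\sim n$ from \eqref{pro}, the trace bounds of Lemma \ref{trace}, and the stability estimate $\|u\|_{H^1(\Omega)}\lesssim\|g\|_{L^2(\partial D)}$). The only cosmetic difference is that you package the truncation estimate as an $H^{-1/2}(\partial B_R)$--$H^{1/2}(\partial B_R)$ duality bound on $\|(T-T_N)u\|_{H^{-1/2}(\partial B_R)}$, whereas the paper applies Cauchy--Schwarz directly to the Fourier series and splits the weight $(1+n(n+1))^{1/2}$ between the $u$ and $\psi$ factors; the two computations are equivalent term by term.
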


\begin{proof}
Define
\begin{eqnarray*}
J_1&=&\langle g,\psi-\psi_h\rangle_{\partial D}-a_N(u_h^N,\psi-\psi_h),\\
J_2&=&\langle (T-T_N)u,\psi\rangle_{\partial B_R},
\end{eqnarray*}
where $\psi_h\in V_h$. It follows from (\ref{p3}) that
\[
a(\xi,\psi)+\langle(T-T_N)\xi,\psi\rangle_{\partial
B_R}=J_1+J_2.
\]
Using (\ref{febi}) and the integration by parts, we obtain
\[
J_1=\sum_{K\in\mathcal{M}_h}\biggl(\int_K (\Delta u_h^N+\kappa^2 u_h^N)
(\bar{\psi}-\bar{\psi}_h){\rm d}x+\sum_{F\in\partial
K}\frac{1}{2}\int_{F}J_F(\bar{\psi}-\bar{\psi}_h){\rm d}s\biggl).
\]
Now we take $\psi_h=\Pi_h\psi\in V_h$, where $\Pi_h$ is the Scott--Zhang
interpolation operator and has the approximation properties
\[
\|v-\Pi_h v\|_{L^2(K)}\lesssim h_K\|\nabla v\|_{L^2(\tilde{K})},\quad \|v-\Pi_h
v\|_{L^2(F)}\lesssim h_F^{\frac{1}{2}}\|\nabla v\|_{L^2(\tilde{K}_F)},
\]
Here $\tilde{K}$ and $\tilde{K}_F$ are the union of all the elements
in $\mathcal{M}_h$, which have nonempty intersection with element $K$
and the face $F$, respectively.

By the Cauchy--Schwarz inequality, we have 
\begin{eqnarray*}
 |J_1|&\lesssim& \sum\limits_{K\in\mathcal{M}_h}
 \bigg(h_K\|(\Delta+\kappa^2)u_h^N\|_{L^2(K)}\|\nabla\psi\|_{L^2(\tilde{K})}
 +\sum\limits_{F\in\partial K}\frac{1}{2}h_F^{\frac{1}{2}}\|J_F\|_{L^2(F)}
 \|\psi\|_{H^1(\tilde{K}_F)}\bigg)\\
&\lesssim&\sum_{K\in\mathcal{M}_h}\bigg[h_K\|(\Delta+\kappa^2)u_h^N\|_{L^2(K)}
+\bigg(\sum_{F\in\partial
K}\frac{1}{2}h_F\|J_F\|_{L^2(F)}^2\bigg)^{\frac{1}{2}}\bigg]
\|\psi\|_{H^1(\Omega)}\\
 &\lesssim&
\bigg(\sum_{K\in\mathcal{M}_h}\eta_K^2\bigg)^2\|\psi\|_{H^1(\Omega)}.
\end{eqnarray*}
It follows from the definitions of $T, T_N$ and Lemma \ref{lem-uu} that
\begin{eqnarray*}
|J_2|=|\langle(T-T_N)u,\psi\rangle_{\partial B_R}|
&=&\Big{|}R\sum_{n>N}\sum_{|m|\leq n}\Theta_n(\kappa R)\hat{u}_n^m(R)
\bar{\hat{\psi}}_n^m(R)\Big{|}\\
&\lesssim& \sum_{n>N}\sum_{|m|\leq n}|\Theta_n(\kappa
R)||\hat{u}_n^m(R)||\bar{\hat{\psi}}_n^m(R)|\\
&\lesssim& \sum_{n>N}\sum_{|m|\leq n}|\Theta_n(\kappa R)
|\Big{|}\Big(\frac{R'}{R}\Big)^N\hat{u}_n^m(R')\Big{|}
|\bar{\hat{\psi}}_{nm}(R)|\\
&\lesssim& \Big(\frac{R'}{R}\Big)^N\sum_{n>N}\sum_{|m|\leq
n}|\Theta_n(\kappa R)||\hat{u}_n^m(R')||\bar{\hat{\psi}}_n^m(R)|.
\end{eqnarray*}
Using (\ref{pro}), the Cauchy--Schwarz inequality, and Lemma \ref{trace} yields
\begin{eqnarray*}
|J_2| &\lesssim& \Big(\frac{R'}{R}\Big)^N\sum_{n>N}\sum_{|m|\leq
n}|\Theta_n(\kappa R)||\hat{u}_n^m(R')||\bar{\hat{\psi}}_n^m(R)|\\
&\lesssim& \Big(\frac{R'}{R}\Big)^N\sum_{n>N}(1+n(n+1))^{\frac{1}{2}}\sum_{|m|
\leq n}|\hat{u}_n^m(R')||\bar{\hat{\psi}}_n^m(R)|\\
&\leq&\Big(\frac{R'}{R}\Big)^N\sum_{n>N}(1+n(n+1))^{\frac{1}{2}}\Big(\sum_{
|m|\leq n}|\hat{u}_n^m(R')|^2\Big)^{\frac{1}{2}}\Big(\sum_{|m|\leq n}|
\bar{\hat{\psi}}_n^m(R)|^2\Big)^{\frac{1}{2}}\\
&\leq&
\Big(\frac{R'}{R}\Big)^N\Big(\sum_{n>N}(1+n(n+1))^{\frac{1}{2}}\sum_{|m|\leq
n}|\hat{u}_n^m(R')|^2\Big)^{\frac{1}{2}}\Big(\sum_{n>N}(1+n(n+1))^{\frac{1}{2}}
\sum_{|m|\leq n}|\bar{\hat{\psi}}_n^m(R)|^2\Big)^{\frac{1}{2}}\\
&\lesssim& \Big(\frac{R'}{R}\Big)^N\|u\|_{H^{\frac{1}{2}}(\partial
B_{R'})}\|\psi\|_{H^{\frac{1}{2}}(\partial B_R)}\\
&\lesssim&\Big(\frac{R'}{R}\Big)^N\|u\|_{H^{1}(\Omega)}\|\psi\|_{H^{\frac{1}{2}}
(\partial B_R)}.
\end{eqnarray*}
Combining the above estimates gives
\[
|J_1|+|J_2|\lesssim
\bigg(\Big(\sum_{K\in\mathcal{M}_h}\eta_K^2\Big)^{\frac{1}{2}}+
\Big(\frac{R'}{R}\Big)^N\|g\|_{L^2(\partial D)}\bigg)\|\psi\|_{H^1(\Omega)},
\]
which completes the proof.
\end{proof}

\begin{lemma}
Let $w$ be the solution to the dual problem $\rm(\ref{bi})$. Then the following
estimate holds: 
\[
|\langle (T-T_N)\xi,w\rangle_{\partial
B_R}|\lesssim N^{-2}\|\xi\|^2_{H^1(\Omega)}.
\]
\end{lemma}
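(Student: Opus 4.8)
The plan is to pass to the spherical-harmonic expansion on $\partial B_R$, where $T-T_N$ acts diagonally and annihilates the modes $n\le N$, and then to balance the weights in a Cauchy--Schwarz estimate so that the leftover exponent produces the factor $N^{-2}$. Writing $\hat\xi_n^m=\hat\xi_n^m(R)$ and $\hat w_n^m=\hat w_n^m(R)$, the definitions of $T$ and $T_N$ give
\[
\langle (T-T_N)\xi,w\rangle_{\partial B_R}
= R\sum_{n>N}\Theta_n(\kappa R)\sum_{|m|\le n}\hat\xi_n^m\,\overline{\hat w_n^m},
\]
so that $|\langle (T-T_N)\xi,w\rangle_{\partial B_R}|\lesssim \sum_{n>N}(1+n(n+1))^{1/2}\sum_{|m|\le n}|\hat\xi_n^m|\,|\hat w_n^m|$, where I have used $|\Theta_n(\kappa R)|\lesssim n\asymp (1+n(n+1))^{1/2}$ from \eqref{pro}.

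Next I would split the weight as $(1+n(n+1))^{1/2}=(1+n(n+1))^{-1}\,(1+n(n+1))^{1/4}\,(1+n(n+1))^{5/4}$ and observe that, since $n>N$, the first factor satisfies $(1+n(n+1))^{-1}\lesssim N^{-2}$. Applying the Cauchy--Schwarz inequality with the remaining two factors attached to $\xi$ and $w$ respectively then yields
\[
|\langle (T-T_N)\xi,w\rangle_{\partial B_R}|
\lesssim N^{-2}\,\|\xi\|_{H^{1/2}(\partial B_R)}\,\|w\|_{H^{5/2}(\partial B_R)}.
\]
The first factor is harmless: by the trace estimate of Lemma \ref{trace}, $\|\xi\|_{H^{1/2}(\partial B_R)}\lesssim\|\xi\|_{H^1(\Omega)}$, and this is the most regularity one can extract from $\xi=u-u_h^N$, since the discrete part $u_h^N$ is only $H^1$.

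The crux is therefore to show $\|w\|_{H^{5/2}(\partial B_R)}\lesssim\|\xi\|_{H^1(\Omega)}$, i.e.\ that the dual solution is two derivatives smoother than $H^{1/2}$ on the artificial boundary. Here I would exploit that the annulus $A=B_R\setminus\overline{B}_{R'}$ is contained in $\Omega$ and stays away from the merely Lipschitz obstacle boundary $\partial D$, so that the only boundary of $A$ reaching $\partial B_R$ is the smooth sphere, on which $w$ satisfies the DtN-type condition $\partial_r w=T^*w$. In $A$ the dual solution solves $\Delta w+\kappa^2 w=-\xi$ with $\xi\in H^1(A)$ (both $u$ and $u_h^N$ lie in $H^1$), so a bootstrap of elliptic regularity up to $\partial B_R$---first $w\in H^2$ from the $L^2$ source, then $w\in H^3$ once the source $-\xi-\kappa^2 w$ is seen to lie in $H^1$---gives $w\in H^3$ near $\partial B_R$ and hence $\|w\|_{H^{5/2}(\partial B_R)}\lesssim\|\xi\|_{H^1(A)}+\|w\|_{H^1(\Omega)}\lesssim\|\xi\|_{H^1(\Omega)}$, the last step using the dual stability bound $\|w\|_{H^1(\Omega)}\lesssim\|\xi\|_{L^2(\Omega)}$. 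Substituting the two trace bounds into the displayed estimate completes the proof.

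I expect the main obstacle to be exactly this gain-of-two-derivatives regularity for $w$ at $\partial B_R$: the boundary operator $\partial_r-T^*$ is nonlocal, so the regularity up to the boundary must be justified through the ellipticity and favourable sign of the first-order operator $T^*$, or, more in the self-contained spirit of this paper, through a mode-by-mode analysis of the radial ODE satisfied by $\hat w_n^m(r)$ on $(R',R)$ that refines the weighted one-dimensional trace inequality already used in the proof of Lemma \ref{trace} so as to control $(1+n(n+1))^{5/2}|\hat w_n^m(R)|^2$. Everything else is the routine weight bookkeeping sketched above.
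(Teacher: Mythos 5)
Your opening reduction---the mode-wise expansion of $\langle (T-T_N)\xi,w\rangle_{\partial B_R}$, the bound $|\Theta_n(\kappa R)|\lesssim n$ from \eqref{pro}, the weight split that extracts $N^{-2}$ from the modes $n>N$, and the Cauchy--Schwarz step leading to $N^{-2}\|\xi\|_{H^{1/2}(\partial B_R)}\big(\sum_{n>N}\sum_{|m|\le n}(1+n(n+1))^{5/2}|\hat w_n^m(R)|^2\big)^{1/2}$---is exactly the paper's bookkeeping (the paper writes the split as $n=((1+n^2)^{1/2}n^3)^{-1/2}(1+n^2)^{1/4}n^{5/2}$, which is equivalent), and the trace bound for $\xi$ is Lemma \ref{trace}, as you say. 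But your proof stops where the real work begins: the inequality $\|w\|_{H^{5/2}(\partial B_R)}\lesssim\|\xi\|_{H^1(\Omega)}$ (or its restriction to the modes $n>N$, which is all that is needed) is asserted rather than proved, and you yourself flag it as the main obstacle. The elliptic bootstrap you sketch does not close it: up-to-the-boundary regularity theorems do not apply off the shelf to the nonlocal condition $\partial_r w-T^*w=0$, and the natural two-sided alternative---extending $\bar w$ across $\partial B_R$ as a radiating solution of the homogeneous Helmholtz equation and invoking interior regularity---fails at the $H^3$ step, because the source $-\xi$ extended by zero outside $B_R$ jumps across the sphere, hence lies only in $H^{1/2-\epsilon}$ near $\partial B_R$; this yields at best an $H^{2-\epsilon}$ trace, strictly short of $H^{5/2}$. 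So the gain of two derivatives, while true, genuinely requires a one-sided argument adapted to the DtN condition.

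That argument is precisely what the paper's proof consists of, and it is the part you defer to without executing. In the annulus $B_R\setminus\overline B_{R'}$ each coefficient $\hat w_n^m(r)$ solves a second-order ODE with source $-\hat\xi_n^m(r)$, Dirichlet data at $r=R'$, and the DtN condition at $r=R$; variation of parameters with the spherical Hankel functions $h_n^{(1)},h_n^{(2)}$ gives a representation of $\hat w_n^m(R)$ in terms of $\hat w_n^m(R')$ and an integral of $\hat\xi_n^m$ against the kernels $S_n(r)=h_n^{(2)}(\kappa r)/h_n^{(2)}(\kappa R')$ and $W_n(R',t)$. The asymptotics \eqref{asy} then supply the two crucial facts, $|S_n(R)|\lesssim (R'/R)^n$ and $|W_n(R',t)|\lesssim n^{-1}(t/R')^{n}$, whence
\[
|\hat w_n^m(R)|\lesssim \Big(\frac{R'}{R}\Big)^{n}|\hat w_n^m(R')|
+n^{-2}\|\hat\xi_n^m\|_{L^\infty([R',R])},
\]
which is the rigorous form of your ``two derivatives smoother'' claim; note that the inner-boundary contribution is not merely bounded but exponentially damped, which is why control of $w$ on $\partial B_{R'}$ in only $H^{1/2}$ (via Lemma \ref{trace} and the dual stability bound $\|w\|_{H^1(\Omega)}\lesssim\|\xi\|_{L^2(\Omega)}$) suffices. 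The source contribution is summed using a one-dimensional Sobolev inequality bounding $\|\hat\xi_n^m\|^2_{L^\infty([R',R])}$ by weighted $L^2$ norms of $\hat\xi_n^m$ and of its radial derivative, together with the Fourier characterization of $\|\xi\|_{H^1}$ on the annulus. Without this ODE analysis (or a fully justified pseudodifferential regularity argument for the nonlocal boundary condition, which would be far less elementary than anything else in the paper), the central inequality of your proof is unsupported; as written, the proposal is a correct reduction plus a correctly identified but unproved key lemma.
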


\begin{proof}
It  follows from (\ref{pro}), Lemma \ref{trace} and the Cauchy--Schwarz
inequality that we have
\begin{eqnarray*}
|\langle (T-T_N)\xi,w\rangle_{\partial B_R}|
&\lesssim& \sum_{n>N}\sum_{|m|\leq n}|\Theta_n(\kappa R)
||\hat{\xi}_n^m(R)||\hat{w}_n^m(R)|\\
&\lesssim& \sum_{n>N}\sum_{|m|\leq
n}|n||\hat{\xi}_n^m(R)||\hat{w}_n^m(R)|\\
&=&\sum_{n>N} ((1+n^2)^{\frac{1}{2}}n^3)^{-\frac12}\sum_{|m|\leq n}
(1+n^2)^{\frac14}n^{\frac52}|\hat{\xi}_n^m(R)||\hat{w}_n^m(R)|\\
&\leq& N^{-2}\sum_{n>N} \sum_{|m|\leq n}(1+n^2)^{\frac14}n^{\frac52}
|\hat{\xi}_n^m(R)||\hat{w}_n^m(R)|\\
&\leq& N^{-2}\bigg(\sum_{n>N} \sum_{|m|\leq n}(1+n(n+1))^{\frac12}
|\hat{\xi}_n^m(R)|^2\bigg)^{\frac12}\bigg(\sum_{n>N}
\sum_{|m|\leq n}n^{5}|\hat{w}_n^m(R)|^2\bigg)^{\frac12}\\
&=&N^{-2}\|\xi\|_{H^{\frac12}(\partial B_R)}\bigg(\sum_{n>N} \sum_{|m|\leq
n}n^{5}|\hat{w}_n^m(R)|^2\bigg)^{\frac12}\\
& \lesssim & N^{-2}\|\xi\|_{H^{1}(\Omega)}\bigg(\sum_{n>N} \sum_{|m|\leq
n}n^{5}|\hat{w}_n^m(R)|^2\bigg)^{\frac12}.
\end{eqnarray*}
To estimate $\hat{w}_n^m(R)$, we consider the dual problem
(\ref{dual}) in the annulus $B_R\setminus B_{R'}$: 
\[
\begin{cases}
\Delta w + \kappa^2 w =-\xi \quad &\text{in} ~ B_R\setminus
\overline{B}_{R'}, \\
w = w(R',\hat{x}) \quad &\text{on} ~\partial R', \\
\partial_r w-T^* w=0  \quad &\text{on} ~\partial B_R,
\end{cases}
\]
which reduces to the second order equation for the coefficients
$\hat{w}_n^m$ in the Fourier domain
\[
\begin{cases}
\frac{{\rm d}^2\hat{w}_n^m(r)}{{\rm d}r^2}+\frac{2}{r}\frac{{\rm
d}\hat{w}_n^m(r)}{{\rm d}r}+\big(\kappa^2-\frac{n(n+1)}{r^2}\big)\hat{
w}_n^m(r)=-\hat{\xi}_n^m(r), & R'<r<R, \\
\frac{{\rm d}\hat{w}_n^m(R)}{{\rm
d}r}-\frac{1}{R}\overline{\Theta}_n(\kappa R)\hat{w}_n^m(R)=0, & r=R,\\
\hat{w}_n^m(R')=\hat{w}_n^m(R'), & r=R'.
\end{cases}
\]
By the method of the variation of parameters, we obtain the solution of the
above equation
\begin{equation}\label{wnm}
\hat{w}_n^m(r)=S_n(r)\hat{w}_n^m(R')+\frac{{\rm i
\kappa}}{2}\int_{R'}^{r}t^2W_n(r,t)\hat{\xi}_n^m(t){\rm d}t
+\frac{{\rm i\kappa}}{2}\int_{R'}^{R}t^2S_n(t)W_n(R',r)\hat{\xi}_n^m(t){\rm
d}t,
\end{equation}
where
\[
S_n(r)=\frac{h_n^{(2)}(\kappa r)}{h_n^{(2)}(\kappa R')},\quad 
W_n(r,t)=\det\left[
\begin{matrix}
h_n^{(1)}(\kappa r) & h_n^{(2)}(\kappa r)  \\
h_n^{(1)}(\kappa t) & h_n^{(2)}(\kappa t)
\end{matrix}
\right] .
\]
Taking $r=R$ in (\ref{wnm}), we get
\[
\hat{w}_n^m(R)=S_n(R)\hat{w}_n^m(R')+\frac{{\rm
i}\kappa}{2}\int_{R'}^{R}t^2S_n(R)W_n(R',t)\hat{\xi}_n^m(t){\rm d}t.
\]
Using the asymptotic expression (\ref{asy}) yields 
\[
S_n(R)\sim\Big(\frac{R'}{R}\Big)^n, \quad n\rightarrow \infty
\]
and
\begin{eqnarray*}
W_n(R',t)&=&2{\rm i}j_n(\kappa R')y_n(\kappa R')\bigg(\frac{j_n(\kappa
t)}{j_n(\kappa R')}-\frac{y_n(\kappa t)}{y_n(\kappa R')}\bigg)\\
&\sim&-\frac{2{\rm i}}{(2n+1)\kappa
R'}\bigg(\Big(\frac{t}{R'}\Big)^n-\Big(\frac{R'}{t}\Big)^{n+1}\bigg), 
\quad n\rightarrow \infty.
\end{eqnarray*}
Hence
\[
|S_n(R)|\lesssim\Big(\frac{R'}{R}\Big)^n, \quad | W_n(R',t)|\lesssim
n^{-1}\Big(\frac{t}{R'}\Big)^{n}.
\]
Combining the above estimates, we obtain 
\begin{eqnarray*}
|\hat{w}_n^m(R)|&\leq& |S_n(R)||\hat{w}_n^m(R')|
+\frac{\kappa}{2}\int_{R'}^{R}t^2|S_n(R)||W_n(R',t)||\hat{\xi}_n^m(t)|{\rm
d}t,\\
&\lesssim&\Big(\frac{R'}{R}\Big)^n|\hat{w}_n^m(R')|+
n^{-1}\Big(\frac{R'}{R}\Big)^{n}\|\hat{\xi}_n^m(t)\|_{L^\infty([R',R])}
\int_{R'}^{R}t^2\Big(\frac{t}{R'}\Big)^{n}{\rm d}t\\
&\lesssim&\Big(\frac{R'}{R}\Big)^n|\hat{w}_n^m(R')|+
n^{-2}\|\hat{\xi}_n^m(t)\|_{L^\infty([R',R])},
\end{eqnarray*}
which gives
\begin{eqnarray*}
\sum_{n>N} \sum_{|m|\leq n}n^{5}|\hat{w}_n^m(R)|^2
&\lesssim& \sum_{n>N} \sum_{|m|\leq n}n^{5}\bigg(\Big(\frac{R'}{R}\Big)^n
|\hat{w}_n^m(R')|+n^{-2}\|\hat{\xi}_n^m(t)\|_{L^\infty([R',R])}
\bigg)^2\\
&\lesssim& \sum_{n>N} \sum_{|m|\leq
n}n^{5}\bigg(\Big(\frac{R'}{R}\Big)^{2n}
|\hat{w}_n^m(R')|^2+n^{-4}\|\hat{\xi}_n^m(t)\|^2_{L^\infty([R',R])}
\bigg)\\
 &:=&I_1+I_2.
\end{eqnarray*}
Here
\begin{align*}
 I_1=&\sum_{n>N} \sum_{|m|\leq n}n^{5}\Big(\frac{R'}{R}\Big)^{2n}
     |\hat{w}_n^m(R')|^2,\\
 I_2=&\sum_{n>N} \sum_{|m|\leq n}n\|\hat{\xi}_n^m(t)\|^2_{L^\infty([R',R])}.
\end{align*}
A simple calculation yields
\begin{eqnarray*}
I_1&\lesssim &\max_{n>N}n^{4}\Big(\frac{R'}{R}\Big)^{2n}\sum_{n>N}
\sum_{|m|\leq n}n~|\hat{w}_n^m(R')|^2\lesssim \sum_{n>N} \sum_{|m|\leq
n}n~|\hat{w}_n^m(R')|^2\\
&\lesssim &\sum_{n>N} (1+n^2)^{\frac12}\sum_{|m|\leq
n}|\hat{w}_n^m(R')|^2\leq\|w\|^2_{H^{\frac12} (\partial B_{R'})}
\lesssim \|\xi\|^2_{H^1(\Omega)}.
\end{eqnarray*}
By \cite[Lemma 5]{JLLZ-JSC17}, we have
\[
 \|\hat{\xi}_n^m(t)\|^2_{L^\infty([R',R])}\leq
 \Big(\frac{2}{\delta}+n\Big)\|\hat{\xi}_n^m(t)\|^2_{L^2([R',R])}
 +n^{-1}\|\hat{\xi}_n^{m'}(t)\|^2_{L^2([R',R])},
\]
where $\delta=R-R'$. Following a similar proof of Lemma 4.2 yields
\[
\begin{split}
\|\xi\|^2_{H^1(B_R\setminus \overline{B}_R)}&\geq\sum_{n=0}^{\infty}
\sum_{|m|\leq n}\int_{R'}^{R}\left[(r^2+n(n+1))|\xi_n^m(r)|^2+r^2|\xi_n^{m'}
(r)|^2\right] {\rm d}r\\
&\geq \sum_{n=0}^{\infty}\sum_{|m|\leq n}\int_{R'}^{R}\left[({R'}
^2+n(n+1))|\xi_n^m(r)|^2+{R'}^2|\xi_n^{m'}(r)|^2\right ] {\rm d}r,
\end{split}
\]
which gives 
\[
I_2=\sum_{n>N} \sum_{|m|\leq n}n\|\hat{\xi}_{nm}(t)\|^2_{L^\infty([R',R])}
\lesssim\|\xi\|^2_{H^1(\Omega)}.
\]
Therefore, we obtain 
\[
 \sum_{n>N} \sum_{|m|\leq
n}n^{5}|\hat{w}_n^m(R)|^2\lesssim\|\xi\|^2_{H^1(\Omega)},
\]
which completes the proof.
\end{proof}

Now we prove the main theorem. 

\begin{proof}
We conclude from (\ref{tu})--(\ref{pro}) that 
\[
 \Re\langle T_N\xi,\xi\rangle_{\partial B_R}=R\sum_{n>N}
 \sum_{|m|\leq n}\Re(\Theta(\kappa R))|\hat{\xi}_n^m|^2\leq 0.
\]
It follows from (\ref{H1norm}) and Lemma \ref{lem-uu} that there exist two positive constants $C_1$ and $C_2$
independent of $h$ and $N$ satisfying
\[
 \|\xi\|^2_{H^1(\Omega)}\leq
C_1\bigg(\Big(\sum_{K\in\mathcal{M}_h}\eta_K^2\Big)^{\frac12}+
     \Big(\frac{R'}{R}\Big)^N\|g\|_{L^2(\partial D)}\bigg)
      \|\xi\|_{H^1(\Omega)}+C_2 \|\xi\|_{L^2(\Omega)}.
\]
Using (\ref{L2}) and Lemmas \ref{lem-uu}--\ref{lem-est}, we obtain
\[
 \|\xi\|^2_{L^2(\Omega)}\leq
C_3\bigg(\Big(\sum_{K\in\mathcal{M}_h}\eta_K^2\Big)^{\frac12}+
     \Big(\frac{R'}{R}\Big)^N\|g\|_{L^2(\partial D)}\bigg)
     \|\xi\|_{L^2(\Omega)}+C_4N^{-2} \|\xi\|_{H^1(\Omega)},
\]
where $C_3$ and $C_4$ are positive constants independent of $h$ and $N$.
Combining the above estimates yields
\[
 \|\xi\|^2_{H^1(\Omega)}\leq
C_5\bigg(\Big(\sum_{K\in\mathcal{M}_h}\eta_K^2\Big)^{\frac12}+
     \Big(\frac{R'}{R}\Big)^N\|g\|_{L^2(\partial D)}\bigg)
     \|\xi\|_{H^1(\Omega)}+C_6N^{-2} \|\xi\|_{H^1(\Omega)},
\]
where $C_5$ and $C_6$ are positive constants independent of $h$ and $N$.
We may choose a sufficiently large integer $N_0$ such that $C_6N_0^{-2}<1/2$,
which completes the proof by taking $N>N_0$. 
\end{proof}

\section{Numerical experiments}

In this section, we discuss the implementation of the adaptive finite element
algorithm with the truncated DtN boundary condition and present two numerical
examples to demonstrate the competitive performance of the proposed method.
There are two components which need to be designed carefully in order to
efficiently implement the h-adaptive method. The first one is an effective
management mechanism of the mesh grids. The another one is an effective
indicator for the adaptivity. The a posteriori error estimate from Theorem
\ref{thm} is used to generate the indicator in our algorithm.

\subsection{The hierarchy geometry tree}

In our algorithm, we use the hierarchy geometry tree (HGT) or the hierarchical
grids to manage the data structure of the mesh grids \cite{BHL12}. The
structure of the grids is described hierarchically. For example, the element
such as a point for 0-dimension, an edge for 1-dimension, a triangle for
2-dimension, a tetrahedron for 3-dimension is called a geometry. If a
triangle is one of the faces of a tetrahedron, then it belongs to this
tetrahedron. Similarly, if an edge is one of the edges of a triangle, then it
belongs to this triangle. Hence all geometries in the tetrahedrons have
belonging-to relationship.
 
A tetrahedron $T_{0}$ can be uniformly divided into eight small sub-tetrahedrons
$\{T_{0,0},T_{0,1} \cdots T_{0,7} \}$.  In this refinement operation, every
face of the tetrahedron is divided into four smaller triangles. This procedure
can be managed by the octree data structure which is given by Figure
\ref{fig1}, which shows that the sub-tetrahedrons $T_{0,0}$ and
$T_{0,6}$ are further divided into eight smaller sub-tetrahedrons. In the 
octree, we name $T_0$ as the root node and those nodes without further
subdivision like $T_{0,1}$ and $T_{0,0,0}$ as the leaf nodes. Obviously, 
a set of root nodes $\{T_{i}\}, i =0,1,\cdots$ can form a three-dimensional
initial mesh for a domain $\Omega$ and a set of all the leaf nodes of the HGTs
also form a mesh. 

\begin{figure}[h]
\centering
\begin{tikzpicture}[scale = 0.8]
            
\draw (0,0) node(0) [above=5]{${T_0}$}circle(8pt) ;
\draw (-3.5,-2.5) node(1) [above=5]{$T_{0,0}$}circle(8pt) ;
\draw (-2.5,-2.5) node(2) [above=5]{$T_{0,1}$}circle(8pt) ;
\draw (-1.5,-2.5) node(3) [above=5]{$T_{0,2}$}circle(8pt) ;
\draw (-0.5,-2.5) node(4) [above=5]{$T_{0,3}$}circle(8pt) ;
\draw (0.5,-2.5) node(5) [above=5]{$T_{0,4}$}circle(8pt) ;
\draw (1.5,-2.5) node(6) [above=5]{$T_{0,5}$}circle(8pt) ;
\draw (2.5,-2.5) node(7) [above=5]{$T_{0,6}$}circle(8pt) ;
\draw (3.5,-2.5) node(8) [above=5]{$T_{0,7}$}circle(8pt) ;
\draw [->](0,0) -- (-3.5,-2.5);
\draw [->](0,0) -- (-2.5,-2.5);
\draw [->](0,0) -- (-1.5,-2.5);
\draw [->](0,0) -- (-0.5,-2.5);
\draw [->](0,0) -- (0.5,-2.5);
\draw [->](0,0) -- (1.5,-2.5);
\draw [->](0,0) -- (2.5,-2.5);
\draw [->](0,0) -- (3.5,-2.5);
\draw (-8,-5) node(9) [above=5]{$T_{0,0,0}$}circle(8pt) ;
\draw (-7.,-5) node(10) [above=5]{$T_{0,0,1}$}circle(8pt) ;
\draw (-6,-5) node(11) [above=5]{$T_{0,0,2}$}circle(8pt) ;
\draw (-5,-5) node(12) [above=5]{$T_{0,0,3}$}circle(8pt) ;
\draw (-4,-5) node(13) [above=5]{$T_{0,0,4}$}circle(8pt) ;
\draw (-3,-5) node(14) [above=5]{$T_{0,0,5}$}circle(8pt) ;
\draw (-2,-5) node(15) [above=5]{$T_{0,0,6}$}circle(8pt) ;
\draw (-1,-5) node(16) [above=5]{$T_{0,0,7}$}circle(8pt) ;
\draw (1.5,-5) node(9) [above=5]{$T_{0,6,0}$}circle(8pt) ;
\draw (2.5,-5) node(10) [above=5]{$T_{0,6,1}$}circle(8pt) ;
\draw (3.5,-5) node(11) [above=5]{$T_{0,6,2}$}circle(8pt) ;
\draw (4.5,-5) node(12) [above=5]{$T_{0,6,3}$}circle(8pt) ;
\draw (5.5,-5) node(13) [above=5]{$T_{0,6,4}$}circle(8pt) ;
\draw (6.5,-5) node(14) [above=5]{$T_{0,6,5}$}circle(8pt) ;
\draw (7.5,-5) node(15) [above=5]{$T_{0,6,6}$}circle(8pt) ;
\draw (8.5,-5) node(16) [above=5]{$T_{0,6,7}$}circle(8pt) ;
\draw [->](-3.5,-2.5) -- (-8,-5);
\draw [->](-3.5,-2.5) -- (-7,-5);
\draw [->](-3.5,-2.5) -- (-6,-5);
\draw [->](-3.5,-2.5) -- (-5,-5);
\draw [->](-3.5,-2.5) -- (-4,-5);
\draw [->](-3.5,-2.5) -- (-3,-5);
\draw [->](-3.5,-2.5) -- (-2,-5);
\draw [->](-3.5,-2.5) -- (-1,-5);
\draw [->](2.5,-2.5) -- (1.5,-5);
\draw [->](2.5,-2.5) -- (2.5,-5);
\draw [->](2.5,-2.5) -- (3.5,-5);
\draw [->](2.5,-2.5) -- (4.5,-5);
\draw [->](2.5,-2.5) -- (5.5,-5);
\draw [->](2.5,-2.5) -- (6.5,-5);
\draw [->](2.5,-2.5) -- (7.5,-5);
\draw [->](2.5,-2.5) -- (8.5,-5);
\end{tikzpicture}
\caption{A schematic of octree data structure.}
\label{fig1}
\end{figure}
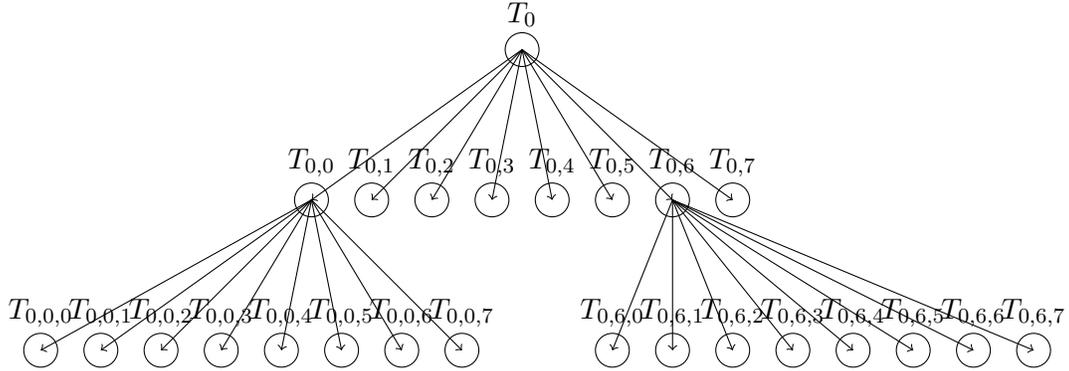

By using the HGT, the refinement and even the coarsening of a mesh can be done
efficiently. However, it may cause the hanging points in the direct neighbors
of the refined tetrahedrons. In order to remove these hanging points, two kinds
of geometries may be introduced: twin-tetrahedron and four-tetrahedron. For the
twin-tetrahedron geometry as shown in Figure \ref{fig2} (left), it has five
degrees of freedom (DoF) and consists of two standard tetrahedrons. To
conform the finite element space, the following strategy is used to construct
the basis function in twin-tetrahedron geometry. For each basis function, the
value is 1 at the corresponding interpolation point and the value is 0 at the
other interpolation points. For the common point of the two sub-tetrahedrons in
the twin-tetrahedron like A, D and E, the support of the basis function is the
whole twin-tetrahedron. For the points B and C, the support of their
corresponding basis function is only the tetrahedron ABED and the tetrahedron
AECD, respectively. For the four-tetrahedron as shown in Figure \ref{fig2}
(right), the similar strategy is used. With the twin-tetrahedron geometry and
the four-tetrahedron geometry, the local refinement can be implemented easily.

\begin{figure}[h]
\centering
\begin{minipage}{0.48\linewidth}
\centering
\begin{tikzpicture}
\draw [fill](0,0) node(B)[left]{B} circle(1pt) ;
\draw [fill](1.6,1) node(D)[left]{D} circle(1pt);
\draw [fill](2,0) node(E) [below]{E} circle(1pt);
\draw [fill](4,0) node(C)[right]{C} circle(1pt);
\draw [fill](1.6,3) node(A) [above]{A} circle(1pt);
\draw[dashed] (0,0)  -- (1.6,1)  -- (4,0);
\draw[dashed] (1.6,3) -- (1.6,1) -- (2,0);
\draw (0,0) -- (4,0);
\draw (0,0) -- (1.6,3);
\draw (2,0) -- (1.6,3);
\draw (1.6,3) -- (4,0);
\end{tikzpicture}
\end{minipage}
\begin{minipage}{0.48\linewidth}
\centering
\begin{tikzpicture}
\draw [fill](0,0) node(B)[left]{B} circle(1pt) ;
\draw [fill](1.6,1) node(D)[left]{D} circle(1pt);
\draw [fill](2,0) node(E) [below]{E} circle(1pt);
\draw [fill](4,0) node(C)[right]{C} circle(1pt);
\draw [fill](1.6,3) node(A) [above]{A} circle(1pt);
\draw [fill](0.8,0.5) node(G) [left]{G} circle(1pt);
\draw [fill](2.8,0.5) node(F) [right]{F} circle(1pt);
\draw[dashed] (0,0)  -- (1.6,1)  -- (4,0);
\draw[dashed] (1.6,3) -- (1.6,1) -- (2,0);
\draw[dashed] (1.6,3) -- (0.8,0.5) --(2,0) --(2.8,0.5) --(0.8,0.5);
\draw[dashed] (1.6,3) -- (2.8,0.5);
\draw (0,0) -- (4,0);
\draw (0,0) -- (1.6,3);
\draw (2,0) -- (1.6,3);
\draw (1.6,3) -- (4,0);
\end{tikzpicture}
\end{minipage}
\caption{Two geometries to avoid hanging points. (left) Twin-tetrahedron
geometry. (right) Four-tetrahedron geometry.}
\label{fig2}
\end{figure}
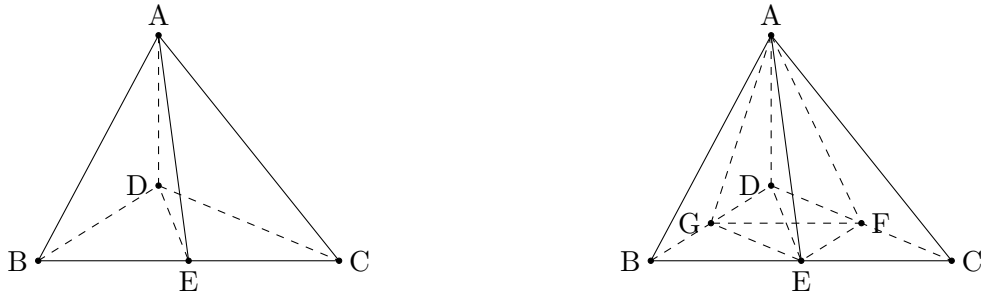

In practice, we use a polyhedral surface to approximate $\partial D$ and
$\partial B_R$. Since the TBC operator is represented by the spherical harmonic
functions whose accuracy depends on how good the approximation is. Obviously, a
rough approximation could not satisfy the computational requirement. Based on
the element geometry introduced above, we present a method to deal with
the surface refinement. Suppose that the domain $\Omega$ has a curved boundary
and the initial mesh is given
by a rough polygon. The traditional surface refinement is performed by taking
the midpoint of each side of the tetrahedron. Hence the shape of the boundary
cannot be well approximated. To resolve this issue, a very simple method is
adopted. When the boundary elements of the mesh need to be refined, we redefine
the midpoint through projecting vertically to the desired curved boundary, as
shown in Figure \ref{fig3}. Thanks to the HGTs, it does not spend much time at
all to find these boundary elements. This method works efficiently in
two-dimensions. But in three-dimensions, it may cause the neighbors to become
non-standard twin-tetrahedron geometry or four-tetrahedrons geometry. To handle
this problem, these special tetrahedrons, whose neighbors do not need
refinement, should not redefine the midpoint. So the marked boundary tetrahedron
will not be refined until the neighboring boundary tetrahedrons are marked in
order to keep the mesh structure. 

\begin{figure}[htp]
\centering
\begin{tikzpicture} 

\filldraw[ball color= white] 
(-1,-1) .. controls (1,-1)  .. (2.5,0) --
(2.5,0) arc (0:90:2.5) --
(0,2.5) ..controls  (-1.2,1) ..(-1,-1);

\draw [<->](-2,-2) node[below] {$x_1$} -- (0,0) -- (4,0) node[right] {$x_2$};
\draw[->] (0,0) -- (0,4) node[above] {$x_3$};

\filldraw (0,0)circle (2pt) ; 
\filldraw (-1,-1)circle (2pt) ; 
\filldraw (2.5,0)circle (2pt);
\filldraw (0,2.5)circle (2pt);
\filldraw (0.75,-0.5)circle (1pt) ; 
\filldraw (-0.5,0.75)circle (1pt);
\filldraw (1.25,1.25)circle (1pt);

\draw [dashed] (0,2.5) -- (-1,-1);
\draw [dashed] (2.5,0) -- (-1,-1);
\draw [dashed] (0,2.5) -- (2.5,0);

\filldraw[color = red] (1.77,1.77)circle (2pt) ; 
\filldraw[color = red] (-0.9,1.3)circle (2pt);
\filldraw[color = red] (1.2,-0.8)circle (2pt);

\draw [dashed] (1.25,1.25) -- (1.77,1.77);
\draw [dashed] (0.75,-0.5) -- (1.2,-0.8);
\draw [dashed] (-0.5,0.75)-- (-0.9,1.3);
\draw [color = red] (-1,-1) -- (-0.9,1.3) -- (0,2.5);
\draw [color = red] (2.5,0) -- (1.77,1.77) -- (0,2.5);
\draw [color = red] (-1,-1) -- (1.2,-0.8) -- (2.5,0);

\end{tikzpicture}
\caption{Mesh refinement on the surface (red points are
redefined midpoints on the boundary).}
\label{fig3}
\end{figure}
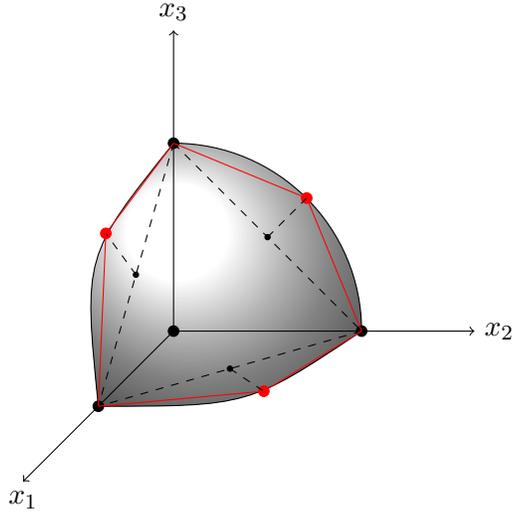

\subsection{The adaptive algorithm}

The numerical simulations are implemented with a C++ library: Adaptive Finite
Element Package (AFEPack). The initial mesh is generated by GMSH\cite{GR09}. The
resulting sparse linear systems are solving by the solver called Eigen. The
simulations are implemented on a HP workstation and are accelerated by using
OpenMP. The a posteriori error estimate from Theorem \ref{thm} is adopted to
generate the indicators in our algorithm. 

The error consists of two parts: the finite
element discretization error $\varepsilon_h$ and the DtN operator truncation
error $\varepsilon_N$ which depends on $N$. Specifically, 
\begin{align}\label{e_N}
\varepsilon_h = \Big( \sum\limits_{K\in\mathcal{M}_h} \eta_K^2 \Big)^{\frac12} =
\eta_{\mathcal{M}_h},\quad \varepsilon_N = \Big(\frac{R'}{R}\Big)^N \|
g\|_{L^2(\partial D)}.
\end{align}
In the implementation, we can choose $R'$, $R$, and $N$ based on
\eqref{e_N} such that finite element discretization error is not contaminated by
the truncation error, i.e., $\varepsilon_N$ is required to be very small
compared with $\varepsilon_h$, for example, $\varepsilon_N \leq 10^{-8}$. For
simplicity, in the following numerical experiments, $R'$ is chosen such
that the scatterer lies exactly in the circle $B_{R'}$ and $N$ is taken to
be the smallest positive integer satisfying $\varepsilon_N \leq 10^{-8}$. 
Table \ref{alg} shows the adaptive finite element algorithm with the DtN
boundary condition for solving the scattering problem.

\begin{table}[htp]
\caption{The adaptive FEM-DtN algorithm.}
\begin{tabular}{ll}\hline
1 & Given a tolerance $\varepsilon > 0$;\\
2 & Choose $R$, $R'$ and $N$  such that $\varepsilon_{N}<10^{-8}$;\\
3 & Construct an initial tetrahedral partition $\mathcal{M}_h$ over
$\Omega$ and compute error estimators;\\
4 & While  $\eta_K >\varepsilon$, do  \\
5 & \qquad mark $K$, refine $\mathcal{M}_h$, and obtain a new mesh
$\hat{\mathcal{M}}_h$.\\
6 & \qquad solve the discrete problem on the $\hat{\mathcal{M}}_h$.\\
7 & \qquad compute the corresponding error estimators;\\
8 & End while.\\ \hline
\end{tabular}
\label{alg}
\end{table}

\subsection{Numerical examples}

We present two numerical examples to illustrate the performance of the proposed
method. In the implementation, the wavenumber is $\kappa =\pi $, which accounts
for the wavelength $\lambda = 2\pi/\kappa=2 $. 

{\em Example 1}. Let the obstacle $D = B_{0.5}$ be the ball with a radius
of 0.5 and $\Omega = B_1\setminus\overline B_{0.5}$ be the computational domain.
The boundary condition $g$ is chosen such that the exact solution is 
\[
u(x) = \frac{e^{{\rm i}\kappa r}}{r},\quad r=|x|.
\]

\begin{figure}[htbp]
 \centering
\includegraphics[width=0.45\textwidth]{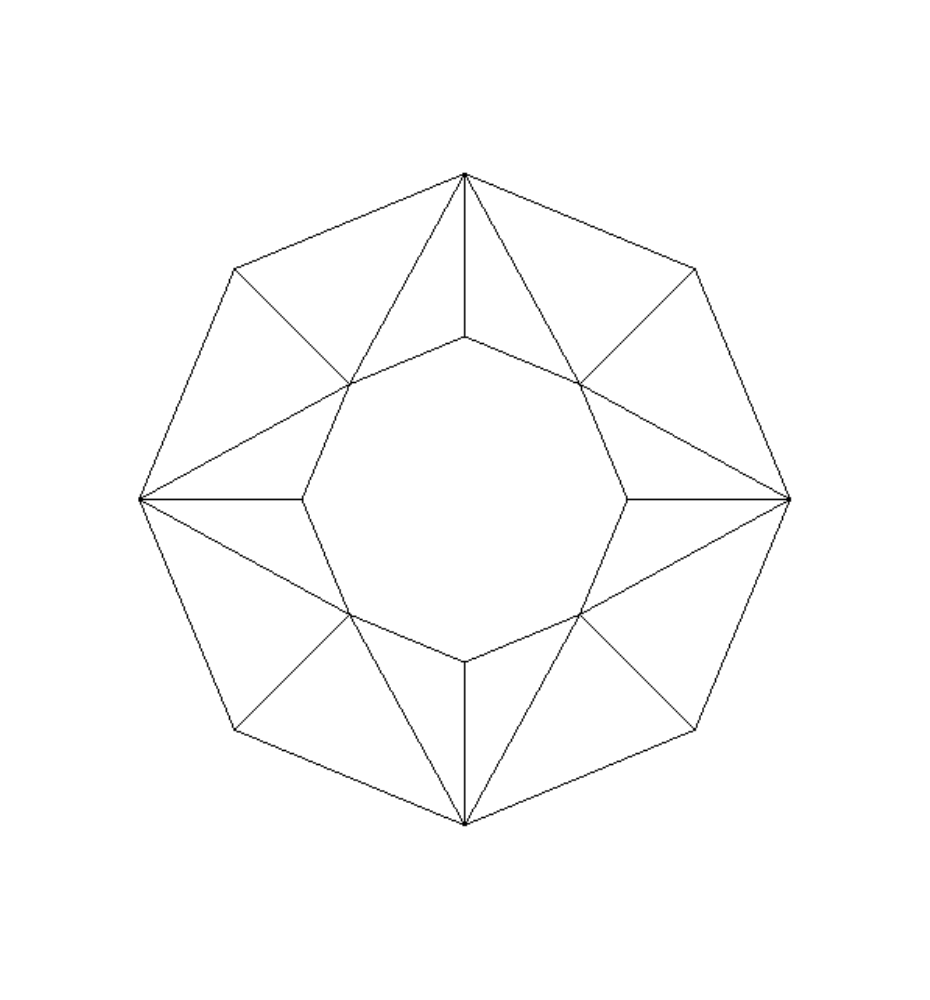}
\includegraphics[width=0.45\textwidth]{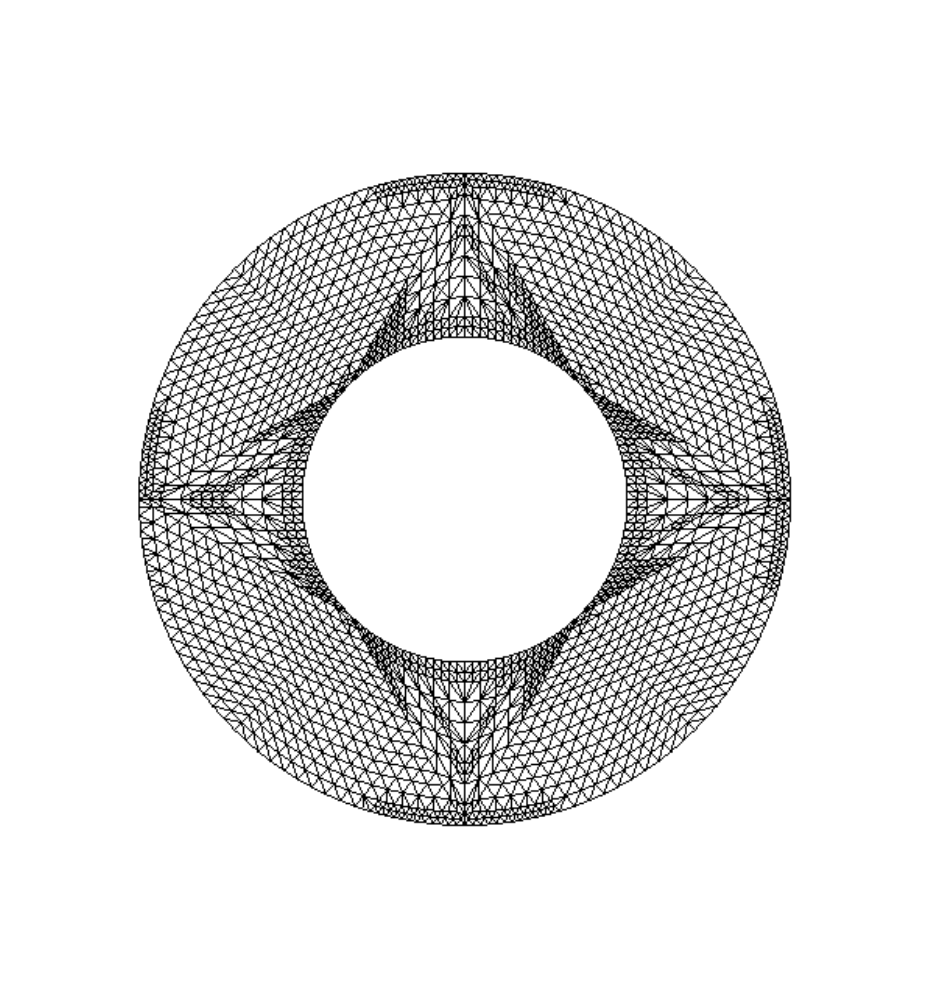}
\caption{Example 1: (left)  initial mesh on the $x_1 x_2$-plane. (right)
adaptive mesh on the $x_1 x_2$-plane.}
\label{figex1_1}
\end{figure}

\begin{figure}[htbp]
 \centering
\includegraphics[width=0.45\textwidth]{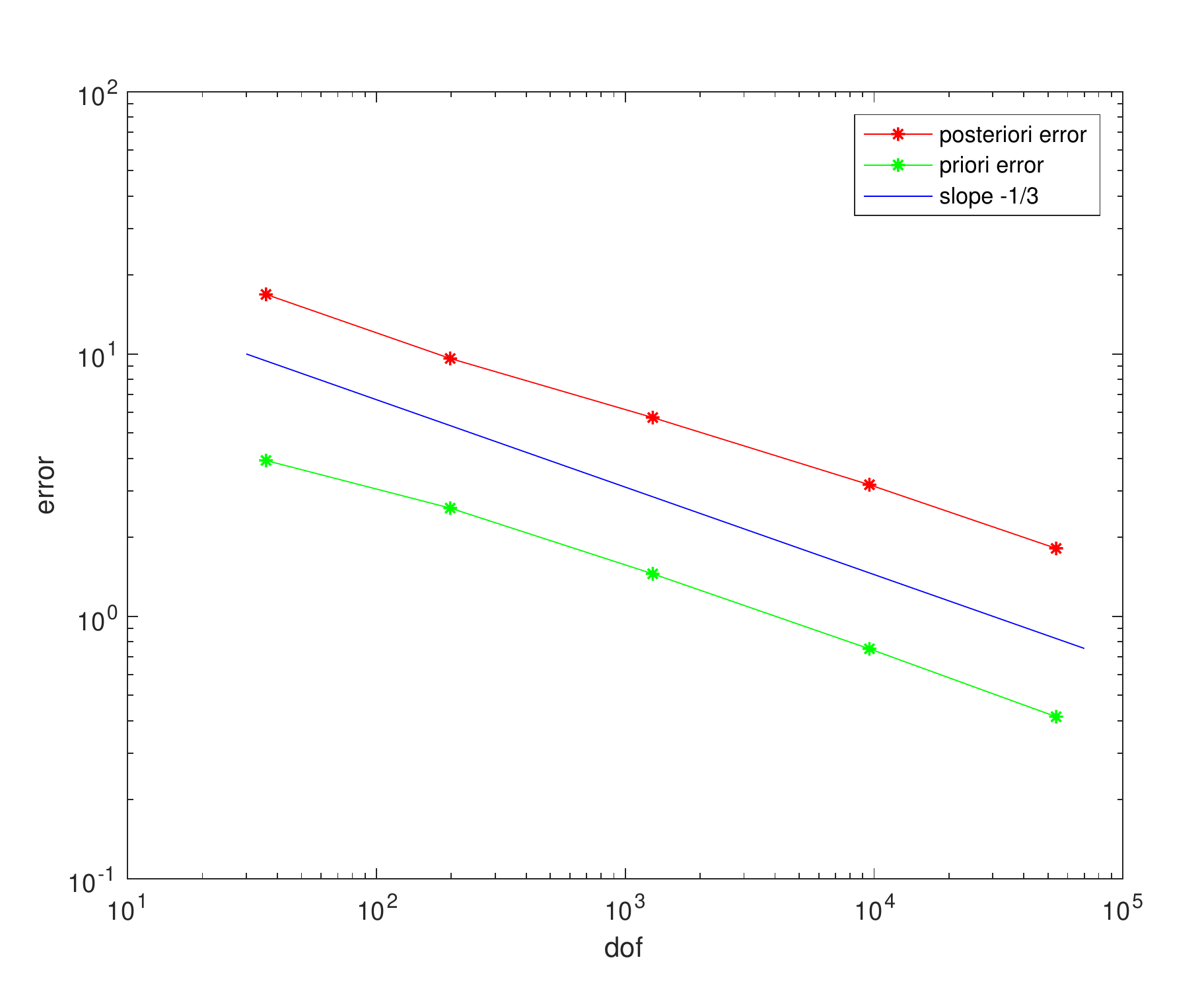}
\caption{Example 1: quasi-optimality of the a priori and a posteriori error
estimates.}
\label{figex1_2}
\end{figure}

The initial mesh and an adaptive mesh is shown in Figure \ref{figex1_1}. Figure
\ref{figex1_2} displays the curves of $\log e_h$ and
$\log \varepsilon_h$ versus $\log {\rm DoF}_h$ for our adaptive DtN method,
where $e_h=\|\nabla(u-u_h^N)\|_{L^2(\Omega)}$ is the a priori error,
$\varepsilon_h$
is the a posteriori error given in \eqref{e_N}, and ${\rm Dof}_h$ denotes the
degree of freedom or the number of nodal points of the
mesh $\mathcal{M}_h$ in the domain $\Omega$. It indicates
that the meshes and associated numerical complexity are quasi-optimal, i.e.,
$\|\nabla(u-u_h^N)\|_{L^2(\Omega)}=\mathcal{O}({\rm
DoF}_h^{-\frac13})$ holds asymptotically.

{\em Example 2}. This example concerns the scattering of the plane wave
$u^{\rm inc} = e^{{\rm i}\kappa x_3}$ by a U-shaped obstacle $D$ which is
contained in the box $\{x\in\mathbb R^3: -0.25 \leq x_1 ,x_2,x_3\leqslant
0.25 \}$. There is no analytical solution for this example and the solution
contains singularity around the corners of the obstacle. The Neumann boundary
condition is set by $g = \partial_\nu u^{\rm inc}$ on $\partial D$. We take $R =
1$, $R' = \frac{\sqrt{3}}{4}$ for the adaptive DtN method. Figure \ref{figex2_1}
shows the cross section of the obstacle and the adaptive mesh of 63898
elements, and the curve of $\log\varepsilon_h$ versus $\log {\rm DoF}_h$. It
implies that the decay of the a posteriori error estimate is $\mathcal{O}({\rm
DoF}_h^{-1/3})$, which is optimal.

\begin{figure}[htbp]
\centering
\includegraphics[width=0.4\textwidth]{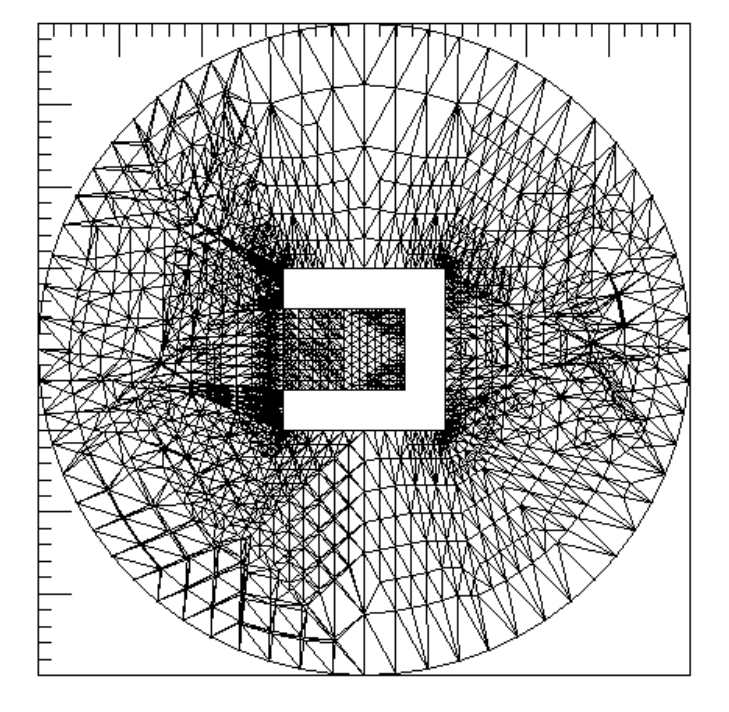}
\includegraphics[width=0.45\textwidth]{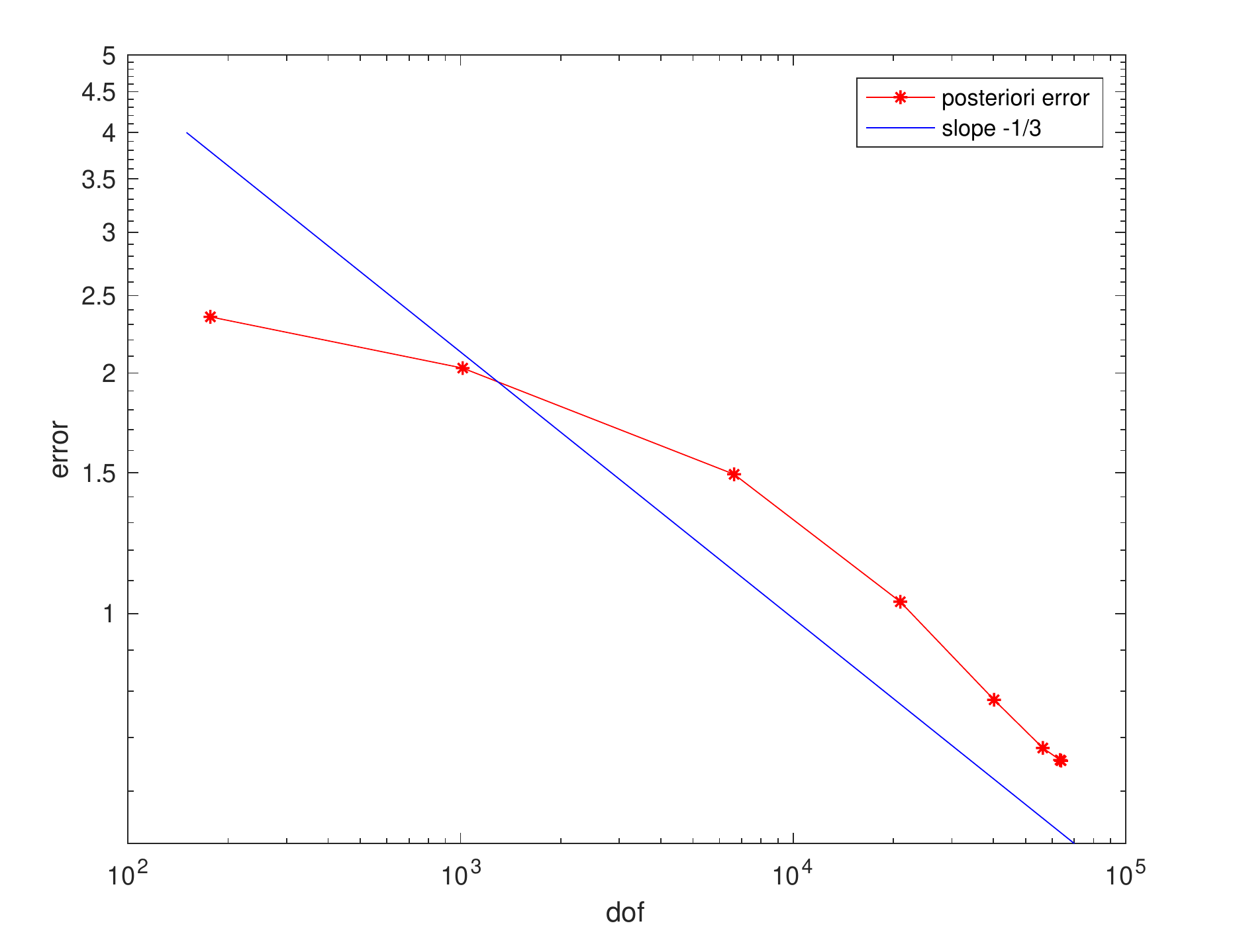}
\caption{Example 2: (left) an adaptively refined mesh with 63898
elements. (right) quasi-optimality of the a posteriori error estimate.}
\label{figex2_1}
\end{figure}

\section{Conclusion}

In this paper, we have presented an adaptive finite element method with the
transparent boundary condition for the three-dimensional acoustic obstacle
scattering problem. The truncated DtN operator was considered for the discrete
problem. A dual argument was developed in order to derive the a posteriori error
estimate. The error consists of the finite element approximation error and the
DtN operator truncation error which was shown to exponentially decay with
respect to the truncation parameter $N$. Numerical results show that the method
is effective to solve the three-dimensional acoustic obstacle scattering
problem. Possible future work is to extend the adaptive FEM-DtN
method for solving the three-dimensional electromagnetic and elastic obstacle
scattering problems, where the wave propagation is governed by the Maxwell
equations and the Navier equation, respectively. We hope to report the progress
on solving these problems elsewhere in the future.

\end{document}